\newtheorem{thm}{Theorem}[section]
\newtheorem{cor}[thm]{Corollary}
\newtheorem{lem}[thm]{Lemma}
\newtheorem{pro}[thm]{Proposition}
\theoremstyle{definition}
\newtheorem{df}[thm]{Definition}
\numberwithin{equation}{section}
\newcommand{\R}{{\mathbb R}}
\newcommand{\D}{{\mathbb D}}
\newcommand{\hmu}{\widehat{\mu}}
\begin{document}

\title[The weighted composition operators]
{The weighted composition operators\\ on the large weighted Bergman spaces}

\subjclass[2010]{}
\keywords{}

\author[I. Park]{Inyoung Park}
\address{BK21-Mathematical Sciences Division, Pusan National University, Busan 46241, Republic of Korea}
\email{iypark26@gmail.com}

\thanks{}

\subjclass[2010]{Primary 30H20, 47B10, 47B35}
\keywords{weighted Composition operator, Large weighted Bergman space, Schatten class}

\begin{abstract}
In this paper, we characterize bounded, compact or Schatten class weighted composition operators acting on Bergman spaces with the exponential type weights. Moreover, we give the proof of the necessary part for the boundedness of $C_\phi$ on large weighted Bergman spaces given by \cite{KM}.
\end{abstract}

\maketitle

\section{Introduction}
Let $\phi$ be an analytic self-map of the open unit disk $\D$ in the complex plane and $u$ be an analytic function on $\D$. The weighted composition operator with respect to $u$ is defined by $(uC_\phi)f(z):=u(z)f(\phi(z))$ where $f$ belongs to the holomorphic function space $H(\D)$. There was much effort to characterize those analytic maps $\phi$ which induce bounded or compact weighted composition operators on the different holomorphic function spaces (see for example, \cite{CGP,CH,CZ,OZ,PR,SU}). When $u(z)\equiv1$, it is well known that every composition operator is bounded on the standard weighted Bergman spaces by the Littlewood subordination principle.\\ \indent On the other hand, in \cite{KM}, Kriete and Maccluer showed that if there is a boundary point $\zeta$ such that the modulus of the angular derivative $|\phi'(\zeta)|$ is less than $1$ then the map $\phi$ induces an unbounded operator $C_\phi$ on the Bergman space with the fast weights. Moreover, they showed that we don't know whether $C_\phi$ is bounded or not when there is a boundary point $|\phi'(\zeta)|=1$ by giving a explicit example map.\\ \indent In this paper, we study the boundedness, the compactness and schatten class weighted composition operators $uC_\phi$ on the Bergman space with rapidly decreasing weights. For an integrable radial function $\omega$, let $L^p(\omega dA)$ be the space of all measurable functions $f$ on $\D$ such that
\begin{align*}
\|f\|^p_{p,\omega}:=\int_\D |f(z)|^p\omega(z)dA(z)<\infty,\quad0<p<\infty,
\end{align*}
where $dA(z)$ is the normalized area measure on $\D$. For a given $0<p<\infty$, the weighted Bergman space $A^{p}(\omega)$ consists of $f$ in the class of $H(\D)\cap L^p(\omega dA)$. Throughout this paper, we consider the radial weights of the form $\omega(z)=e^{-\varphi(z)}$ such that $\varphi\in\mathcal{C}^2(\D)$ and $\Delta \varphi(z)\geq C>0$ for some constant $C$. When $\varphi(z)=\alpha\log(1-|z|)$ where $\alpha>-1$, it represents the standard weighted Bergman spaces $A^p_\alpha(\D)$. In this paper, we assume that $\varphi$ satisfies the following conditions
\begin{align*}
(\Delta \varphi(z))^{-\frac{1}{2}}=:\tau(z)\searrow0,
\end{align*}
and $\tau'(z)\rightarrow0$ when $|z|\rightarrow1^-$ so, we easily check that we exclude the standard weights since $\tau(r)=1-r$ when $\varphi(r)=\log(1-r)$. In fact, these weights described above decrease faster than the standard weight $(1-|z|)^\alpha$, $\alpha>0$. We can refer to Lemma 2.3 of \cite{PP1} for the proof. Furthermore, we assume that $\tau(z)(1-|z|)^{-C}$ increases for some $C>0$ or $\lim_{|z|\rightarrow1^-}\tau'(z)\log\frac{1}{\tau(z)}=0$. Now, we say that the weight $\omega$ is in the class $\mathcal{W}$ if $\varphi$ satisfies all conditions above.
 If $\omega$ belongs to the class $\mathcal{W}$ then the associated function $\tau(z)$ has the following properties $(\mathbf{L})$:
\begin{itemize}
\item[(i)]
There exists a constant $ C_1 > 0 $ such that
\begin{align*}
\tau(z)\leq C_1(1-|z|), \quad \textrm{for}\quad z \in \D.
\end{align*}
\item[(ii)]
There exists a constant $ C_2 > 0 $ such that
\begin{align*}
|\tau(z)-\tau(\xi)|\leq C_2|z-\xi|, \quad \textrm{for}\quad z, \xi\in\D.
\end{align*}
\end{itemize}
The typical weight in the class $\mathcal{W}$ is the type of
\begin{align*}
\omega(r)=(1-r)^\beta\exp\left(\frac{-c}{(1-r)^\alpha}\right),\quad\beta\geq0,\quad\alpha,c>0,
\end{align*}
and its associated function $\tau(z)=(1-|z|)^{1+\frac{\alpha}{2}}$. We can find more examples of our weight functions and their $\tau$ functions in \cite{PP1}. Additionally, we assume that fast weights $\omega\in\mathcal{W}$ have the regularity imposing the rate of decreasing condition in \cite{KM},
\begin{align}\label{regular}
\lim_{t\rightarrow0}\frac{\omega(1-\delta t)}{\omega(1-t)}=0,\quad0<\delta<1.
\end{align}
In \cite{KM}, Kriete and Maccluer gave the following sufficient condition for the boundedness of $C_\phi$ and the equivalent conditions for the compactness of $C_\phi$ on $A^2(\omega)$.
\begin{thm}\label{knownresult}
Let $\omega$ be a regular weight in the class $\mathcal{W}$. If
\begin{align}\label{suffibddwithoutU}
\limsup_{|z|\rightarrow1}\frac{\omega(z)}{\omega(\phi(z))}<\infty,
\end{align}
then $C_\phi$ is bounded on $A^2(\omega)$. Moreover, the following conditions are equivalent:
\begin{itemize}\label{angulcompact}
\item[(1)]$C_\phi$ is compact on $A^2(\omega)$.
\item[(2)]$\lim_{|z|\rightarrow1}\frac{\omega(z)}{\omega(\phi(z))}=0$.
\item[(3)]$d_\phi(\zeta)>1$, $\forall\zeta\in\partial\D$ where $d_\phi(\zeta)=\liminf_{z\rightarrow\zeta}\frac{1-|\phi(z)|}{1-|z|}$.
\end{itemize}
\end{thm}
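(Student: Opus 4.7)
The plan is to route all three assertions through the Carleson-measure machinery associated to the large weighted Bergman space $A^2(\omega)$. Setting $\mu_\phi(E) := \int_{\phi^{-1}(E)}\omega\,dA$ for $E\subset\D$, a change of variables yields $\|C_\phi f\|_{2,\omega}^2 = \int_\D |f(w)|^2\,d\mu_\phi(w)$, so that $C_\phi$ is bounded (respectively, compact) on $A^2(\omega)$ precisely when $\mu_\phi$ is a Carleson (respectively, vanishing Carleson) measure for $A^2(\omega)$. For $\omega\in\mathcal{W}$ one has the standard characterization that $\mu$ is Carleson iff $\sup_{z\in\D}\mu(D(z,r\tau(z)))/(\tau(z)^2\omega(z))<\infty$ for some small $r>0$, and the analogous $\lim_{|z|\to 1}(\cdots)=0$ characterization for the compact embedding.

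For the boundedness direction, the task is to estimate
\[
\frac{1}{\tau(z)^2\omega(z)}\int_{\phi^{-1}(D(z,r\tau(z)))}\omega(\zeta)\,dA(\zeta).
\]
On the preimage $\phi^{-1}(D(z,r\tau(z)))$ one has $\phi(\zeta)$ close to $z$, and property $(\mathbf{L})$(ii) together with the regularity of $\omega$ lets one replace $\omega(\phi(\zeta))$ by $\omega(z)$ up to a multiplicative constant. Combined with the hypothesis $\omega(\zeta)/\omega(\phi(\zeta))=O(1)$, the integrand is controlled by a constant times $\omega(z)$, while the area of the preimage is bounded by a constant multiple of $\tau(z)^2$ via a Schwarz-type estimate for $\phi$ on the Carleson ball. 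This yields the Carleson condition, hence boundedness.

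For the compactness equivalences, the implication (1)$\Rightarrow$(2) is by testing: pick a sequence $z_n\to\partial\D$ and apply $C_\phi$ to the normalized reproducing kernels $k_{z_n}=K_{z_n}/\|K_{z_n}\|_{2,\omega}$, which converge weakly to $0$. Using the two-sided kernel estimate $\|K_z\|_{2,\omega}^2\asymp \omega(z)^{-1}\tau(z)^{-2}$ that is standard for $\omega\in\mathcal{W}$, one extracts a lower bound for $\|C_\phi k_{z_n}\|_{2,\omega}^2$ of order $\omega(z_n)/\omega(\phi(z_n))$ up to bounded factors; compactness forces this expression to vanish along every sequence approaching the boundary, giving (2). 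For (2)$\Rightarrow$(1), one re-runs the boundedness argument in vanishing form: the stronger hypothesis promotes the Carleson supremum to a $\lim_{|z|\to 1}=0$ statement, so $\mu_\phi$ is a vanishing Carleson measure and $C_\phi$ is compact.

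The equivalence (2)$\Leftrightarrow$(3) is analytic. Writing $\omega=e^{-\varphi}$, condition (2) becomes $\varphi(z)-\varphi(\phi(z))\to+\infty$. The regularity assumption (\ref{regular}) encodes quantitative information on how $\varphi$ grows under a definite dilation toward the boundary, which allows one to translate the divergence of $\varphi(z)-\varphi(\phi(z))$ into a quantitative inequality $1-|\phi(z)|\ge c(1-|z|)$ with some $c>1$ along every approach to the boundary; Julia's lemma then yields $d_\phi(\zeta)>1$ uniformly over $\zeta\in\partial\D$, and the converse follows by reversing the computation. The principal obstacle of the whole argument is reconciling the Euclidean scale $1-|z|$ natural to Julia--Carath\'eodory theory with the much finer scale $\tau(z)$ governing the weight; property $(\mathbf{L})$ and the rate-of-decrease condition (\ref{regular}) must be used with care in order to pass between the two without losing the sharpness needed to identify (2) with (3).
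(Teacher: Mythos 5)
First, a point of reference: the paper does not prove this statement at all --- it is quoted from Kriete--MacCluer \cite{KM}, and the paper only supplies the \emph{converse} of the boundedness assertion (Theorem \ref{necessarycondip2}). So your proposal has to stand on its own, and it contains a genuine gap at the heart of the sufficiency direction. Your entire boundedness argument (and likewise the implication (2)$\Rightarrow$(1)) rests on the claim that the area of $\phi^{-1}(D(z,r\tau(z)))$ is $O(\tau(z)^2)$ ``via a Schwarz-type estimate for $\phi$ on the Carleson ball.'' No such estimate exists: Schwarz--Pick controls \emph{images}, not preimages, and the preimage of a disk of radius $r\tau(z)$ under an analytic self-map can be enormously larger than $\tau(z)^2$ in area. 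What the hypothesis $\omega(\zeta)/\omega(\phi(\zeta))=O(1)$ actually gives, via the regularity condition (\ref{regular}), is only the radial confinement $1-|\zeta|\lesssim 1-|\phi(\zeta)|\lesssim 1-|z|$ on the preimage; this places $\phi^{-1}(D(z,r\tau(z)))$ in an annulus of area comparable to $1-|z|$, which is far larger than $\tau(z)^2$ for the weights in $\mathcal{W}$ (where $\tau(z)=o(1-|z|)$), and provides no angular localization whatsoever. Obtaining the Carleson bound $\int_{\phi^{-1}(D(z,r\tau(z)))}\omega\,dA\lesssim\tau(z)^2\omega(z)$ \emph{is} the theorem; asserting it in one sentence is circular. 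The actual argument in \cite{KM} requires different machinery and is the nontrivial content of the result.

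The equivalence (2)$\Leftrightarrow$(3) is also misdescribed. Condition (2) is a limit-equals-zero statement and does not translate into a uniform inequality $1-|\phi(z)|\ge c(1-|z|)$ with a fixed $c>1$; for weights as fast as $e^{-c/(1-r)^{\alpha}}$ the ratio $\omega(z)/\omega(\phi(z))$ can tend to $0$ while $\tfrac{1-|\phi(z)|}{1-|z|}\to1$, so ``reversing the computation'' does not recover (3) from (2). The correct mechanism for (2)$\Rightarrow$(3) is the contrapositive together with Julia's lemma: if $d_\phi(\zeta)\le1$, the horodisk inclusion $\phi(E(\zeta,k))\subseteq E(\phi(\zeta),k)$ forces $|\phi(r\zeta)|\ge r$, hence $\omega(r\zeta)/\omega(\phi(r\zeta))\ge1$ along the radius, contradicting (2). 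The same inequality $|\phi(r\zeta)|\ge r$ (giving $\tau(\phi(r\zeta))\le\tau(r\zeta)$) is what you need, and omit, to make the kernel test in (1)$\Rightarrow$(2) work: the exact identity is $\|C_\phi^*k_z\|^2=K(\phi(z),\phi(z))/K(z,z)\approx\frac{\tau(z)^2}{\tau(\phi(z))^2}\frac{\omega(z)}{\omega(\phi(z))}$ (note $C_\phi^*$, not $C_\phi$, since $C_\phi^*K_z=K_{\phi(z)}$), and the factor $\tau(z)^2/\tau(\phi(z))^2$ must be bounded below before you can conclude anything about $\omega(z)/\omega(\phi(z))$. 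Finally, for (3)$\Rightarrow$(2) you need to upgrade the pointwise condition $d_\phi(\zeta)>1$ for every $\zeta$ to a uniform bound $\liminf_{|z|\to1}\tfrac{1-|\phi(z)|}{1-|z|}>1$ (a compactness argument on $\partial\D$) before the regularity computation of Lemma \ref{d>1} applies globally; your sketch does not address this uniformity.
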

In the same paper, they show that (\ref{suffibddwithoutU}) is even the equivalent condition for the boundedness of $C_\phi$ on $A^2(\omega)$ when $\omega(z)=e^{-\frac{1}{(1-|z|)^\alpha}}$, $\alpha>0$. Therefore, we can expect that the condition (\ref{suffibddwithoutU}) is still the equivalent condition for the boundedness of $C_\phi$ on the Bergman spaces with weights $\omega$ in the class $\mathcal{W}$. In fact, there is almost the converse of Theorem \ref{knownresult} in \cite[Corollary 5.10]{CM} as follows:

\begin{thm}
If
\begin{align*}
\liminf_{r\rightarrow1}\frac{\omega(r)}{\omega(M_\phi(r))}=\infty
\end{align*}
then $C_\phi$ is unbounded on $A^2(\omega)$. Here, $M_\phi(r)=\sup_{\theta}|\phi(re^{i\theta})|$.
\end{thm}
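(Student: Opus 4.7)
The plan is to argue by contraposition, using reproducing kernels as test functions. Suppose $C_\phi$ were bounded on $A^2(\omega)$. Since $A^2(\omega)$ is a reproducing kernel Hilbert space with kernel $K_w$, the standard adjoint identity $C_\phi^{\ast} K_z = K_{\phi(z)}$ gives
\begin{equation*}
\|K_{\phi(z)}\|_{2,\omega} \;\leq\; \|C_\phi\|\,\|K_z\|_{2,\omega}\qquad(z\in\D).
\end{equation*}

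Next I would invoke the two-sided kernel-norm estimate for large weighted Bergman spaces with weights in $\mathcal{W}$, namely $\|K_w\|_{2,\omega}^2 \asymp 1/\bigl(\omega(w)\,\tau(w)^2\bigr)$, as developed in \cite{PP1}. For each $r\in(0,1)$, compactness produces a point $z_r$ on the circle $|z|=r$ with $|\phi(z_r)|=M_\phi(r)$. Since $\omega$ and $\tau$ are radial,
\begin{equation*}
\frac{\|K_{\phi(z_r)}\|_{2,\omega}^2}{\|K_{z_r}\|_{2,\omega}^2} \;\asymp\; \frac{\omega(r)\,\tau(r)^2}{\omega(M_\phi(r))\,\tau(M_\phi(r))^2}.
\end{equation*}

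Under the hypothesis $\liminf_{r\to1}\omega(r)/\omega(M_\phi(r))=\infty$, one necessarily has $M_\phi(r)>r$ for every $r$ sufficiently close to $1$, since otherwise the monotonicity of $\omega$ forces the ratio to be $\leq 1$ along a subsequence. The monotonicity $\tau\searrow 0$ then gives $\tau(M_\phi(r))\leq\tau(r)$, so
\begin{equation*}
\frac{\omega(r)}{\omega(M_\phi(r))} \;\lesssim\; \frac{\|K_{\phi(z_r)}\|_{2,\omega}^2}{\|K_{z_r}\|_{2,\omega}^2} \;\leq\; \|C_\phi\|^2,
\end{equation*}
and taking $\liminf$ as $r\to 1$ contradicts the hypothesis.

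The only substantial ingredient beyond standard reproducing-kernel manipulations is the two-sided kernel estimate, which will be the main technical step if it is not already available for every weight in $\mathcal{W}$; fortunately it is established for this class in \cite{PP1} and may be imported directly. A minor point to verify along the way is that $\tau$, being defined from the radial function $\varphi$, is itself radial and indeed monotonically decreasing in $|w|$ (which is exactly what the notation $\tau(z)\searrow 0$ encodes), so that the inequality $\tau(M_\phi(r))\leq \tau(r)$ used above is legitimate.
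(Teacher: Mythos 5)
Your argument is correct. Note that the paper does not actually prove this statement: it is imported from \cite[Corollary 5.10]{CM} as a known result. Your contrapositive argument via reproducing kernels is a legitimate self-contained proof, and it is in fact the same technique the paper itself uses later for the necessity direction of boundedness (Theorem \ref{necessarycondip2}): there too one tests $C_\phi^*$ on normalized kernels at points $\xi_n$ with $|\phi(\xi_n)|=M_\phi(|z_n|)$, uses $\|C_\phi^*k_z\|^2_{2,\omega}=K_{\phi(z)}(\phi(z))/\|K_z\|^2_{2,\omega}$ together with the diagonal estimate $K(w,w)\approx \omega(w)^{-1}\tau(w)^{-2}$ (this is (\ref{diagonalesti}), due to \cite{LR} rather than \cite{PP1}, a citation you should fix), and discards the factor $\tau(\cdot)^2/\tau(\phi(\cdot))^2\geq 1$ using the monotonicity of $\tau$. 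The one place where your argument diverges is how you get $M_\phi(r)>r$ near the boundary: the paper obtains the analogous inequality $|\phi(r\zeta)|\geq r$ from Julia's Lemma under the assumption $d_\phi(\zeta)=1$, whereas you extract it directly from the hypothesis $\liminf_r \omega(r)/\omega(M_\phi(r))=\infty$ together with the fact that $\omega$ is radially decreasing (if $M_\phi(r_n)\leq r_n$ along a sequence, the ratio is $\leq 1$ there, contradicting the hypothesis). That shortcut is valid here and is simpler than invoking Julia's Lemma; just make explicit that you are using the monotonicity of the radial weight $\omega$, which the paper also uses implicitly (e.g.\ in the proof of Lemma \ref{d>1}). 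All other ingredients ($C_\phi^*K_z=K_{\phi(z)}$ for bounded $C_\phi$, attainment of the supremum defining $M_\phi(r)$ on the compact circle, radiality of $\omega$ and $\tau$) are unobjectionable.
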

In Section 3, we show that the condition (\ref{suffibddwithoutU}) is a necessary condition of the boundedness of $C_\phi$ on $A^2(\omega)$, thus we have the following result for general $p$ with the help of the Carleson measure theorem.
\begin{thm}
Let $\omega$ be a regular weight in the class $\mathcal{W}$ and $\phi$ be an analytic self-map
of $\D$. Then the composition operator $C_\phi$ is bounded on $A^p(\omega)$, $0<p<\infty$ if and only if
\begin{align*}
\limsup_{|z|\rightarrow1^-}\frac{\omega(z)}{\omega(\phi(z))}<\infty.
\end{align*}
\end{thm}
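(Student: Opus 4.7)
The plan is to use the equivalence between boundedness of composition operators and Carleson measures to reduce the problem to the case $p=2$, where the sufficient direction is given by Theorem~\ref{knownresult}. Observe that $C_\phi$ is bounded on $A^p(\omega)$ if and only if the pull-back measure $\nu_\phi(E):=\int_{\phi^{-1}(E)}\omega(w)\,dA(w)$ is a Carleson measure for $A^p(\omega)$, i.e.\ $\int_\D|f|^p\,d\nu_\phi\leq C\|f\|_{p,\omega}^p$ for every $f\in A^p(\omega)$. For weights in $\mathcal{W}$, such Carleson measures admit a geometric characterization in terms of $\tau$-disks,
\[
\sup_{z\in\D}\frac{\nu(D(z,r\tau(z)))}{\omega(z)\tau(z)^2}<\infty,
\]
and crucially this condition is independent of $p$ (cf.\ \cite{PP1}). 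Hence boundedness of $C_\phi$ on $A^p(\omega)$ for one $p\in(0,\infty)$ is equivalent to boundedness for all, so it suffices to prove the theorem for $p=2$.

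\textbf{Necessity at $p=2$ via reproducing kernels.} Assuming $C_\phi$ is bounded on $A^2(\omega)$, I would test against the reproducing kernels of $A^2(\omega)$. Writing $K_z$ for the reproducing kernel at $z$, the adjoint acts by $C_\phi^*K_z=K_{\phi(z)}$, so
\[
\|K_{\phi(z)}\|_{2,\omega}\leq \|C_\phi^*\|\,\|K_z\|_{2,\omega}=\|C_\phi\|\,\|K_z\|_{2,\omega}.
\]
Combining this with the standard kernel growth $\|K_z\|_{2,\omega}^2\approx 1/(\omega(z)\tau(z)^2)$, valid for weights in $\mathcal{W}$ (see \cite{PP1}), I would deduce the $\tau$-weighted estimate
\[
\omega(z)\tau(z)^2\leq C\,\omega(\phi(z))\tau(\phi(z))^2\qquad\text{for all } z\in\D.
\]

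\textbf{Removing the $\tau$-factor.} The final step passes from this $\tau$-weighted estimate to the desired $\omega(z)/\omega(\phi(z))\leq C$. I would split into two cases. If $|\phi(z)|\leq|z|$, the radial monotonicity of $\omega$ gives $\omega(\phi(z))\geq\omega(z)$, so the quotient is at most $1$. If $|\phi(z)|>|z|$, then since $\tau(z)\searrow 0$ is radially decreasing, $\tau(\phi(z))\leq\tau(z)$ and the weighted estimate collapses to
\[
\frac{\omega(z)}{\omega(\phi(z))}\leq C\,\frac{\tau(\phi(z))^2}{\tau(z)^2}\leq C.
\]
In either case the required bound follows, completing the necessity. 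The main technical obstacle is therefore not in Step 3 (which is essentially a monotonicity dichotomy once the $\tau$-weighted form is in hand) but rather in Steps 1--2: the $p$-independent Carleson characterization and the two-sided kernel estimate $\|K_z\|_{2,\omega}^2\approx 1/(\omega(z)\tau(z)^2)$. Both rest on fine $\bar\partial$-estimates and atomic decompositions specific to the fast-weight setting, which I would import from \cite{PP1} rather than redevelop here.
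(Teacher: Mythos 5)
Your proof is correct, and the necessity half takes a genuinely different --- and shorter --- route than the paper's. Both arguments reduce to $p=2$ via the $p$-independence of the Carleson-measure characterization of the pull-back measure (exactly as the paper does between Theorems \ref{necessarycondip2} and \ref{pextension}), both import sufficiency from Theorem \ref{knownresult}, and both test boundedness on reproducing kernels using $C_\phi^*K_z=K_{\phi(z)}$ together with the diagonal estimate $K(z,z)\approx\omega(z)^{-1}\tau(z)^{-2}$ to arrive at
\[
\frac{\omega(z)}{\omega(\phi(z))}\,\frac{\tau(z)^2}{\tau(\phi(z))^2}\lesssim\|C_\phi\|^2.
\]
The divergence is in how the factor $\tau(z)^2/\tau(\phi(z))^2$ is removed. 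The paper first invokes the Kriete--MacCluer necessary condition $d_\phi(\zeta)\geq1$, disposes of boundary points with $d_\phi(\zeta)>1$ by Lemma \ref{d>1} (which needs the regularity hypothesis (\ref{regular})), and at points with $d_\phi(\zeta)=1$ uses Julia's lemma to get $M_\phi(r)\geq r$, then evaluates the kernel at a point $\xi_n$ on the circle $|\xi_n|=|z_n|$ where $|\phi|$ attains its maximum, so that $\tau(\phi(\xi_n))\leq\tau(\xi_n)$. Your dichotomy --- if $|\phi(z)|\leq|z|$ then $\omega(z)/\omega(\phi(z))\leq1$ by radial monotonicity of $\omega$, while if $|\phi(z)|>|z|$ then $\tau(\phi(z))\leq\tau(z)$ so the $\tau$-factor is at least $1$ and can be dropped --- achieves the same end at the single point $z$ itself, with no angular derivatives, no Julia's lemma, and, notably, no use of (\ref{regular}) in the necessity direction. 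The monotonicity of $\omega$ you invoke is legitimate: $\varphi$ radial, $\mathcal{C}^2$ and subharmonic forces $r\varphi'(r)$ to be nondecreasing with value $0$ at the origin, hence $\varphi'\geq0$; the paper uses the same fact implicitly when it passes from $\omega(M_\phi(|z_n|))$ to $\omega(\phi(z_n))$. What the paper's longer route buys is the explicit link to the angular-derivative condition and Lemma \ref{d>1}, which are reused for compactness; what yours buys is brevity and weaker hypotheses. Two small corrections: the two-sided diagonal kernel estimate is Lemma 3.6 of Lin--Rochberg \cite{LR} rather than \cite{PP1}, and your geometric Carleson condition should be read with $\omega$ essentially constant on $D(\delta\tau(z))$ so that it matches the paper's condition $\sup_z\mu(D(\delta\tau(z)))/\tau(z)^2<\infty$ for $d\mu=\omega^{-1}\,d\nu_\phi$.
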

In Section 4, we studied the weighted composition operator $uC_\phi$ on $A^2(\omega)$. In \cite{CZ}, we can find the characterizations of the boundedness, compactness and membership in the Schatten classes of $uC_\phi$ defined on the standard weighted Bergman spaces in terms of the generalized Berezin transform. Using the same technique, we extended their results in \cite{CZ} to the case of large weighted Bergman spaces $A^2(\omega)$. Those our results are given by Theorem \ref{Berezin} and Theorem \ref{essentialnorm} in Section 4. Finally, we studied the membership of $uC_\phi$ in the Schatten classes $S_p$, $p>0$ in Section 5.

\bigskip

\textit{Constants.}
 In the rest of this paper, we use the notation
$X\lesssim Y$  or $Y\gtrsim X$ for nonnegative quantities $X$ and $Y$ to mean
$X\le CY$ for some inessential constant $C>0$. Similarly, we use the notation $X\approx Y$ if both $X\lesssim Y$ and $Y \lesssim X$ hold.

\section{Some preliminaries}
In this section, we recall some well-known notions and collect related facts to be used in our proofs in later section.
\subsection{Carleson type measures}
Let $\tau$ be the positive function on $\D$ satisfying the properties $\mathbf{(L)}$ introduced in Section 1. We define the Euclidean disk $D(\delta\tau(z))$ having a center at $z$ with radius $\delta\tau(z)$. Let
\begin{align}\label{carlesonbox}
m_\tau:=\frac{\min(1,C_1^{-1},C_2^{-1})}{4},
\end{align}
where $C_1, C_2$ are in the properties (i), (ii) of $\mathbf{(L)}$. In \cite{PP1}, they show that for $0<\delta<m_\tau$ and $a\in\D$,
\begin{align}\label{comparable}
\frac{1}{2}\tau(a)\leq\tau(z)\leq 2\tau(a),\quad\text{for}\quad z\in D(\delta\tau(a)).
\end{align}
The following lemma is the generalized sub-mean value theorem for $|f|^p\omega$.

\begin{lem}\cite[Lemma 2.2]{PP1}\label{subharmoni-1}
Let $\omega=e^{-\varphi}$, where $\varphi$ is a subharmonic function. Suppose the function $\tau$ satisfies properties $(\mathbf{L})$ and  $\tau(z)^2\Delta\varphi(z)\leq C$ for some constant $C>0$. For $\beta\in\R$ and $0<p<\infty$, there exists a constant $M\geq1$ such that
\begin{align*}
|f(z)|^p\omega(z)^\beta\leq\frac{M}{\delta^2\tau(z)^2}\int_{D(\delta\tau(z))}|f|^p\omega^\beta dA
\end{align*}
for a sufficiently small $\delta>0$, and $f\in H(\D)$.
\end{lem}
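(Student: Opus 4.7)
The plan is to reduce the statement to the classical planar sub-mean value inequality for subharmonic functions by factoring out the weight locally. Fix $z\in\D$ and $0<\delta<m_\tau$, and work on the Euclidean disk $D_z:=D(\delta\tau(z))$.

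First I would establish a local decomposition of $\varphi$. On $D_z$ the comparability \eqref{comparable} gives $\tau\asymp\tau(z)$, so the hypothesis $\tau^2\Delta\varphi\leq C$ yields $\Delta\varphi(w)\leq 4C/\tau(z)^2$ on $D_z$. Let $k$ solve the Dirichlet problem $\Delta k=\Delta\varphi$ on $D_z$ with $k=0$ on $\partial D_z$. The standard Newtonian potential estimate on a disk of radius $\delta\tau(z)$ then gives $|k(w)|\leq C'\delta^2$ on $D_z$, with $C'$ depending only on the constant $C$ from the hypothesis. Consequently $h:=\varphi-k$ is harmonic on the simply connected disk $D_z$, so there exists a holomorphic function $H$ on $D_z$ with $\mathrm{Re}\,H=h$.

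Next I would apply the sub-mean value inequality to a holomorphic twist of $f$. Define
\begin{align*}
F(w):=f(w)\,e^{-(\beta/p)H(w)},\qquad w\in D_z.
\end{align*}
Since $f\in H(\D)$, $F$ is holomorphic on $D_z$ and, as $0<p<\infty$, $|F|^p$ is subharmonic on $D_z$; the planar sub-mean value inequality then gives
\begin{align*}
|F(z)|^p\leq\frac{1}{\pi\delta^2\tau(z)^2}\int_{D_z}|F|^p\,dA.
\end{align*}
Since $|F(w)|^p=|f(w)|^p e^{-\beta\mathrm{Re}\,H(w)}=|f(w)|^p\omega(w)^\beta e^{\beta k(w)}$ and $e^{\beta k(w)}$ is pinched between two positive constants depending only on $\beta$, $C$ and $\delta$, combining the two displays and absorbing the resulting constants into $M$ yields the claim, provided $\delta$ is chosen small enough so that all of the above holds.

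The main obstacle is the uniform bound $|k|\leq C'\delta^2$ on $D_z$, independent of $z$. This bound is the whole reason for imposing $\tau^2\Delta\varphi\leq C$: it makes $\Delta\varphi$ small compared to the squared radius of $D_z$, forcing the Dirichlet solution $k$ to be $O(\delta^2)$ uniformly. Without such an estimate the twist $e^{-(\beta/p)H}$ would not reproduce $\omega^{\beta/p}$ up to bounded multiplicative error, and the subharmonicity trick would not translate into an inequality for $|f|^p\omega^\beta$. Everything else in the argument (the conformal reconstruction of $H$ from $h$, the sub-mean value for $|F|^p$, and the change of variables) is routine once this quantitative control on $k$ is in hand.
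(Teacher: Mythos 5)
Your argument is correct and is essentially the standard proof of this lemma: the paper itself only cites \cite[Lemma 2.2]{PP1} without reproducing the proof, and the argument given there follows exactly your scheme (solve the Dirichlet problem for $\Delta\varphi$ on $D(\delta\tau(z))$ to get $|k|\lesssim C\delta^2$ via the Green potential and the bound $\Delta\varphi\lesssim\tau(z)^{-2}$ from \eqref{comparable}, write the harmonic remainder as $\mathrm{Re}\,H$, and apply the sub-mean value inequality to the subharmonic function $|f e^{-(\beta/p)H}|^p$). The only cosmetic point is the normalization of $dA$ in the averaging constant, which is absorbed into $M$.
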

A positive Borel measure $\mu$ in $\D$ is called a (vanishing) Carleson measure for $A^p(\omega)$ if the embedding $A^p(\omega)\subset L^p(\omega d\mu)$ is (compact) continuous where
\begin{align}\label{embedding}
L^p(\omega d\mu):=\left\{f\in\mathcal{M}(\D)\big|\int_\D|f(z)|^p\omega(z)d\mu(z)<\infty\right\},
\end{align}
and $\mathcal{M}(\D)$ is the set of $\mu$-measurable functions on $\D$. Now, we introduce the Carleson measure theorem on $A^p(\omega)$ given by \cite{O}.

\begin{thm}[Carleson measure theorem]
Let $\omega\in\mathcal{W}$ and $\mu$ be a positive Borel measure on $\D$. Then for $0<p<\infty$, we have
\begin{itemize}
\item[(1)]The embedding $I:A^p(\omega)\rightarrow L^p(\omega d\mu)$ is bounded if and only if for a sufficiently small $\delta\in(0,m_\tau)$, we have
\begin{align*}
\sup_{z\in\D}\frac{\mu(D(\delta\tau(z)))}{\tau(z)^{2}}<\infty.
\end{align*}
\item[(2)]The embedding $I:A^p(\omega)\rightarrow L^p(\omega d\mu)$ is compact if and only if for a sufficiently small $\delta\in(0,m_\tau)$, we have
\begin{align*}
\lim_{|z|\rightarrow1}\frac{\mu(D(\delta\tau(z)))}{\tau(z)^{2}}=0.
\end{align*}
\end{itemize}
\end{thm}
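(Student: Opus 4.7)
The plan is to prove each direction of the two equivalences by combining the sub-mean value property of Lemma \ref{subharmoni-1} with a family of concentrated test functions, following the standard pattern used in the theory of Carleson measures for weighted Bergman spaces.

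For the sufficiency direction of (1), I would take $f \in A^p(\omega)$ and apply Lemma \ref{subharmoni-1} with $\beta = 1$ and radius $\delta\tau(z)$ for $\delta < m_\tau$ small enough, obtaining
$$|f(z)|^p \omega(z) \lesssim \frac{1}{\tau(z)^2} \int_{D(\delta\tau(z))} |f(w)|^p \omega(w) \, dA(w).$$
Integrating against $d\mu(z)$ and swapping the order of integration via Fubini turns the inner region into $\{z : w \in D(\delta\tau(z))\}$. Properties (i), (ii) of $(\mathbf{L})$ together with the comparability \eqref{comparable} show that this set is contained in $D(C\delta\tau(w))$ for some absolute $C$ and that $\tau(z)\approx\tau(w)$ on it, which yields
$$\int_\D |f|^p\omega \, d\mu \lesssim \sup_{w \in \D} \frac{\mu(D(C\delta\tau(w)))}{\tau(w)^2} \, \|f\|_{p,\omega}^p.$$
The necessity direction will rely on a family of normalized peak functions $\{f_a\}_{a\in\D} \subset A^p(\omega)$ for which $|f_a(z)|^p\omega(z) \approx \tau(a)^{-2}$ on $D(\delta\tau(a))$ and $\|f_a\|_{p,\omega} \lesssim 1$; testing the embedding inequality against $f_a$ then yields $\mu(D(\delta\tau(a)))/\tau(a)^2 \lesssim \|I\|^p$ uniformly in $a$.

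For part (2), the sufficiency direction splits $\D$ into an inner disk $\{|z|\le r\}$ and its complementary annulus. On the annulus, the vanishing condition combined with the argument of (1) produces an $\varepsilon$-small contribution to $\int |f_n|^p \omega \, d\mu$ for any bounded sequence in $A^p(\omega)$; on the compact inner disk, any weakly null sequence $\{f_n\} \subset A^p(\omega)$ converges uniformly to zero, so the remaining integral vanishes as $n \to \infty$. For necessity, I would argue that the peak functions $f_a$ tend weakly to zero in $A^p(\omega)$ as $|a|\to 1$ (pointwise convergence combined with norm boundedness), so compactness of $I$ forces $\|If_a\|_{L^p(\omega d\mu)} \to 0$, which translates exactly into the claimed vanishing condition.

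The main obstacle will be constructing the peak functions $f_a$ with the required pointwise lower bound and $A^p(\omega)$-norm control. In the ordinary weighted Bergman case one simply uses the normalized reproducing kernel, but for weights in $\mathcal{W}$ the pointwise kernel estimates are considerably more delicate. A standard workaround is to use atomic-decomposition-type peak functions built from the normalized reproducing kernel of $A^2(\omega)$ in \cite{PP1}, suitably truncated, and then to pass to general $p$ via an interpolation or duality argument. With those building blocks in place, the arguments above reduce the full characterization to the formal manipulations sketched here.
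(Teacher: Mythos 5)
First, a point of calibration: the paper does not actually prove this statement --- it is imported from Oleinik \cite{O} (see also \cite{PP1}) --- so there is no in-paper proof to measure you against. The closest the paper comes is Proposition \ref{rescarl}, which re-derives the sufficiency half for $p=2$ using the covering Lemma \ref{oleinik} together with the sub-mean value Lemma \ref{subharmoni-1}; your Fubini version of that step is an equivalent reorganization and is fine, as is the localization $\{z: w\in D(\delta\tau(z))\}\subset D(C\delta\tau(w))$ with $\tau(z)\approx\tau(w)$ there, which indeed follows from property (ii) of $(\mathbf{L})$ and \eqref{comparable}. The sufficiency half of (2) via the inner-disk/annulus splitting is also standard and works, though for $p<1$ you should replace ``weakly null'' by the usual compactness criterion (norm-bounded sequences tending to $0$ uniformly on compact subsets), since $A^p(\omega)$ is not a Banach space in that range.

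The genuine gap is in the necessity directions, namely the construction of the peak functions $f_a$, which you explicitly defer. For $p=2$ the normalized reproducing kernel suffices, because \eqref{diagonalesti} gives $|k_a(\xi)|^2\omega(\xi)\approx\tau(a)^{-2}$ on $D(\delta\tau(a))$. But for general $p$ the route you sketch --- fractional powers of the reproducing kernel, truncation, then ``interpolation or duality'' --- does not work as stated: for weights in $\mathcal{W}$ the kernel has no known explicit form and no known zero-free region, so $K_a^{2/p}$ is not a well-defined holomorphic function for non-integer $2/p$; moreover only the upper off-diagonal bound \eqref{kernesti} is available, so even where defined one could not control $\|K_a^{2/p}\|_{p,\omega}$. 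Interpolation cannot rescue this, since necessity is a statement about one fixed $p$ at a time (and the point of the theorem is precisely that the Carleson condition is $p$-independent). The actual construction (Lemma 3.3 of \cite{PP1}) produces, for each $a\in\D$, a holomorphic $F_a$ with $|F_a|\omega^{1/2}\approx1$ on $D(\delta\tau(a))$, rapid off-diagonal decay, and $\|F_a\|_{p,\omega}^p\approx\tau(a)^2$, by solving a $\bar\partial$-problem with H\"ormander-type weighted estimates. That is the one nontrivial ingredient of the theorem, and it is exactly the piece your proposal leaves unproved; the same functions are also what make your necessity argument for (2) go through, since after normalization they are uniformly bounded in norm and tend to $0$ uniformly on compacta as $|a|\to1^-$.
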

From the above theorem, we note a Carleson measure is independent of $p$ so that if $\mu$ is a Carleson measure on $L^2(\omega d\mu)$ then $\mu$ is a Carleson measure on $L^p(\omega d\mu)$ for all $p$.

\subsection{The Reproducing Kernel}
Let the system of functions $\{e_k(z)\}_{k=0}^\infty$ be an orthonormal basis of $A^2(\omega)$. It is well known that the reproducing kernel for the Bergman space $A^2(\omega)$ is defined by
\begin{align*}
K(z,\xi)=\overline{K_z(\xi)}=\sum_{k=0}^\infty e_k(z)\overline{e_k(\xi)}.
\end{align*}
Unlike the standard weighted Bergman spaces, the explicit form of $K(z,\xi)$ of $A^2(\omega)$ has been unknown. However, we have the precise estimate near the diagonal given by Lemma 3.6 in \cite{LR},
\begin{align}\label{diagonalesti}
|K(z,\xi)|^2\approx K(z,z)K(\xi,\xi)\approx\frac{\omega(z)^{-1}\omega(\xi)^{-1}}{\tau(z)^2\tau(\xi)^2},\quad\xi\in D(\delta\tau(z)),
\end{align}
where $\delta\in(0,m_\tau/2)$. Recently, in \cite{AH}, they introduced the upper estimate for the reproducing kernel as follows: for $z,\xi\in\D$ there exist constants $C,\sigma>0$ such that
\begin{align}\label{kernesti}
|K(z,\xi)|\omega(z)^{1/2}\omega(\xi)^{1/2}\leq C\frac{1}{\tau(z)\tau(\xi)}\exp\left(-\sigma\inf_{\gamma}\int_0^1\frac{|\gamma'(t)|}{\tau(\gamma(t))}\,dt\right),
\end{align}
where $\gamma$ is a piecewise $\mathcal{C}^1$ curves $\gamma:[0,1]\rightarrow\D$ with $\gamma(0)=z$ and $\gamma(1)=\xi$. In the same paper, they remarked that the distance $\beta_\varphi$ denoted by
\begin{align*}
\beta_\varphi(z,\xi)=\inf_{\gamma}\int^1_0\frac{|\gamma'(t)|}{\tau(\gamma(t))}\,dt
\end{align*}
is a complete distance because of the property (i) of $(\mathbf{L})$. By the completeness of $\beta_\varphi$ and the kernel estimate (\ref{kernesti}), we obtain the following lemma.
\begin{lem}\label{testft}
Let $\omega\in\mathcal{W}$. Then the normalized kernel function $k_z(\xi)$ uniformly converges to $0$ on every compact subsets of $\D$ when $|z|\rightarrow1^-$.
\end{lem}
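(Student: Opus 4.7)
The plan is to combine the off-diagonal upper estimate (\ref{kernesti}) with the diagonal asymptotic (\ref{diagonalesti}) to obtain a pointwise bound on $k_z(\xi)$ whose $z$-dependence is captured entirely by the decaying factor $\exp(-\sigma\beta_\varphi(z,\xi))$. The proof then reduces to showing that $\beta_\varphi(z,\xi)\to\infty$ uniformly in $\xi$ on any fixed compact subset as $|z|\to 1^-$.

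By definition, $k_z(\xi)=K(z,\xi)/\sqrt{K(z,z)}$, and specializing (\ref{diagonalesti}) to $\xi=z$ gives $\sqrt{K(z,z)}\approx \omega(z)^{-1/2}/\tau(z)$. Dividing the upper bound (\ref{kernesti}) by this quantity, the factors $\omega(z)^{-1/2}$ and $\tau(z)$ cancel, leaving
$$|k_z(\xi)|\lesssim \frac{\omega(\xi)^{-1/2}}{\tau(\xi)}\exp\bigl(-\sigma\,\beta_\varphi(z,\xi)\bigr).$$
For $\xi$ in a fixed compact set $E\subset\D$, the prefactor $\omega(\xi)^{-1/2}/\tau(\xi)$ is bounded above by a constant $C_E$, so
$$\sup_{\xi\in E}|k_z(\xi)|\lesssim C_E \exp\!\Bigl(-\sigma\inf_{\xi\in E}\beta_\varphi(z,\xi)\Bigr).$$

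To close the argument I use property (i) of $(\mathbf{L})$, namely $\tau(w)\leq C_1(1-|w|)$, which gives, for every piecewise $\mathcal{C}^1$-path $\gamma$ joining $z$ and $\xi$,
$$\int_0^1\frac{|\gamma'(t)|}{\tau(\gamma(t))}\,dt\geq \frac{1}{C_1}\int_0^1\frac{|\gamma'(t)|}{1-|\gamma(t)|}\,dt.$$
Taking the infimum over $\gamma$ yields $\beta_\varphi(z,\xi)\gtrsim d_h(z,\xi)$, where $d_h$ denotes (a fixed multiple of) the hyperbolic distance on $\D$. Since $d_h(z,0)\to\infty$ as $|z|\to 1^-$ and $E$ has finite hyperbolic diameter, the triangle inequality gives $d_h(z,\xi)\geq d_h(z,0)-\sup_{\eta\in E}d_h(\eta,0)$, which tends to infinity uniformly in $\xi\in E$. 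Inserting this into the bound above proves the uniform convergence of $k_z$ to $0$ on $E$. The only non-routine ingredient is the lower bound $\beta_\varphi\gtrsim d_h$, and it is an immediate consequence of property (i) of $(\mathbf{L})$; everything else is bookkeeping with the two kernel estimates.
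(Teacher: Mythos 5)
Your proof is correct and follows essentially the same route as the paper: both combine the off-diagonal bound (\ref{kernesti}) with the diagonal estimate (\ref{diagonalesti}) to get $|k_z(\xi)|\lesssim C_{\mathcal K}e^{-\sigma\beta_\varphi(z,\xi)}$ on a compact set. The only difference is that where the paper appeals to the completeness of $\beta_\varphi$, you make the underlying reason explicit by using property (i) of $(\mathbf{L})$ to bound $\beta_\varphi$ below by the hyperbolic distance, which if anything is the more self-contained way to see that $\beta_\varphi(z,\xi)\to\infty$ uniformly on compacts.
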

\begin{proof}
Given a compact subset $\mathcal{K}$ of $\D$ and $z\in\D$, (\ref{diagonalesti}) and (\ref{kernesti}) follow that
\begin{align*}
|k_z(\xi)|=\frac{|K_z(\xi)|}{\|K_z\|_{2,\omega}}\lesssim\frac{\omega(\xi)^{-1/2}}{\tau(\xi)}e^{-\sigma\beta_\varphi(z,\xi)}\leq C_{\mathcal{K}}e^{-\sigma\beta_\varphi(z,\xi)},\quad\forall\xi\in \mathcal{K}.
\end{align*}
By the completeness of $\beta_\varphi$, we conclude that $k_z(\xi)$ converges to $0$ uniformly on compact subsets of $\D$ when $z$ approaches to the boundary.
\end{proof}

\subsection{The Julia-Caratheodory Theorem}
For a boundary point $\zeta$ and $\alpha>1$, we define the nontangential approach region at $\zeta$ by
\begin{align*}
\Gamma(\zeta,\alpha)=\{z\in\D:|z-\zeta|<\alpha(1-|z|)\}.
\end{align*}
A function $f$ is said to have a nontangential limit at $\zeta$ if
\begin{align*}
\angle\lim_{\substack{z\to \zeta\\ z\in \Gamma(\zeta,\alpha)}}f(z)<\infty,\quad\text{for each}\quad\alpha>1.
\end{align*}
\begin{df}
We say $\phi$ has a finite angular derivative at a boundary point $\zeta$ if there is a point $\eta$ on the circle such that
\begin{align*}
\phi'(\zeta):=\angle\lim_{\substack{z\to \zeta\\ z\in \Gamma(\zeta,\alpha)}}\frac{\phi(z)-\eta}{z-\zeta}<\infty,\quad\text{for each}\quad\alpha>1.
\end{align*}
\end{df}
\begin{thm}[Julia-Caratheodory Theorem]\label{JC}
For $\phi:\D\rightarrow\D$ analytic and $\zeta\in\partial\D$, the following is equivalent:
\begin{itemize}
\item[(1)]$d_\phi(\zeta)=\liminf_{z\rightarrow\zeta}\frac{1-|\phi(z)|}{1-|z|}<\infty$.
\item[(2)]$\phi$ has a finite angular derivative $\phi'(\zeta)$ at $\zeta$.
\item[(3)]Both $\phi$ and $\phi'$ have (finite) nontangential limits at $\zeta$, with $|\eta|=1$ for $\eta=\lim_{r\rightarrow1}\phi(r\zeta)$.
\end{itemize}
Moreover, when these conditions hold, we have $\phi'(\zeta)=d_\phi(\zeta)\bar{\zeta}\eta$ and $d_\phi(\zeta)=\angle\lim_{z\rightarrow\zeta}\frac{1-|\phi(z)|}{1-|z|}$.
\end{thm}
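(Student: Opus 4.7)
The plan is to follow the classical route via Julia's lemma, arranging the implications as $(1)\Rightarrow(3)\Rightarrow(2)\Rightarrow(1)$. The step $(2)\Rightarrow(1)$ is immediate: a finite nontangential limit of $(\phi(z)-\eta)/(z-\zeta)$ along the radius $z=r\zeta$ gives $|\phi(r\zeta)-\eta|=O(1-r)$, whence $1-|\phi(r\zeta)|\leq C(1-r)$ and $d_\phi(\zeta)<\infty$. The step $(3)\Rightarrow(2)$ follows from the integral identity $(\phi(z)-\eta)/(z-\zeta)=\int_0^1\phi'(z+t(\zeta-z))\,dt$ on segments inside $\Gamma(\zeta,\alpha)$ together with dominated convergence, using the boundedness of $\phi'$ in nontangential regions. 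All the depth sits in $(1)\Rightarrow(3)$.

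The main tool is Julia's lemma: if $d_\phi(\zeta)=d<\infty$, choose $z_n\to\zeta$ with $(1-|\phi(z_n)|)/(1-|z_n|)\to d$ and pass to a subsequence so that $\phi(z_n)\to\eta$; since $1-|\phi(z_n)|\to 0$, we have $|\eta|=1$. Applying the Schwarz--Pick inequality to the pair $(z,z_n)$ and letting $n\to\infty$ yields the horocycle inclusion
\[
\frac{|\eta-\phi(z)|^2}{1-|\phi(z)|^2}\leq d\,\frac{|\zeta-z|^2}{1-|z|^2},\qquad z\in\D.
\]
Geometrically, horocycles at $\zeta$ are carried into horocycles at $\eta$; this is the engine of the whole theorem.

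Given Julia's lemma, $\phi\to\eta$ nontangentially at $\zeta$ because every $\Gamma(\zeta,\alpha)$ is inscribed in a horocycle at $\zeta$ whose image lies in a horocycle at $\eta$ inscribed in a larger nontangential region at $\eta$. For the nontangential limit of $\phi'$ itself, I would study $F(z)=(\eta-\phi(z))/(\zeta-z)$: Julia's estimate bounds $|F|$ in $\Gamma(\zeta,\alpha)$, and the Schwarz--Pick bound $(1-|z|^2)|\phi'(z)|\leq 1-|\phi(z)|^2$ keeps $\phi'$ bounded there. A Lindel\"of-type argument then upgrades the sequential limit of $F$ along $\{z_n\}$ to a full nontangential limit, which must equal $d\bar{\zeta}\eta$ by the sharpness of Julia's inequality along the minimizing sequence. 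Unwinding yields $\phi'(\zeta)=d_\phi(\zeta)\bar{\zeta}\eta$ and the refinement $d_\phi(\zeta)=\angle\lim(1-|\phi(z)|)/(1-|z|)$. The main obstacle is exactly this final upgrade from sequential to nontangential information; it hinges on Lindel\"of's theorem for bounded holomorphic functions on $\Gamma(\zeta,\alpha)$ and on the rigidity of equality in Julia's horocycle estimate.
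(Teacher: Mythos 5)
The paper does not prove this statement: Theorem~\ref{JC} is quoted as classical background (it is the Julia--Carath\'eodory theorem as found in Cowen--MacCluer or Shapiro), so there is no in-paper proof to compare against, and your sketch must be judged on its own. The implications $(2)\Rightarrow(1)$ and $(3)\Rightarrow(2)$ are handled correctly (the segment $[z,\zeta)$ does stay inside $\Gamma(\zeta,\alpha)$, so the integral identity plus boundedness of $\phi'$ there is legitimate), and your derivation of Julia's horocycle inequality and of the nontangential convergence $\phi(z)\to\eta$ is sound, since points of $\Gamma(\zeta,\alpha)$ eventually enter every horocycle $E(\zeta,k)$ and the images are trapped in $E(\eta,kd)$, which shrink to $\{\eta\}$.

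The genuine gap is exactly where you flag it, in $(1)\Rightarrow(3)$: the identification $\angle\lim F(z)=d\bar{\zeta}\eta$ for $F(z)=(\eta-\phi(z))/(\zeta-z)$. Your appeal to ``Lindel\"of plus rigidity of Julia's inequality along the minimizing sequence'' does not close it, for two reasons. First, Lindel\"of's theorem upgrades an asymptotic value along a \emph{curve} ending at $\zeta$ to a nontangential limit; a limit along the minimizing sequence $\{z_n\}$ (which need not lie on, or even near, any curve in $\Gamma(\zeta,\alpha)$) is not enough, and in any case you have not produced a limit of $F$ along $\{z_n\}$. Second, the minimizing sequence for $d_\phi(\zeta)$ only controls the modulus $1-|\phi(z_n)|$, hence at best $|F|$, and says nothing about the argument of $\eta-\phi(z_n)$; so ``sharpness of Julia's inequality'' cannot by itself pin down the unimodular factor $\bar{\zeta}\eta$. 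The classical argument instead works directly in $\Gamma(\zeta,\alpha)$: one sandwiches
\[
d_\phi(\zeta)\le\liminf\frac{1-|\phi(z)|}{1-|z|}\le\liminf|F(z)|\le\limsup|F(z)|\le d_\phi(\zeta)
\]
using $1-|\phi(z)|\le|\eta-\phi(z)|$, $|\zeta-z|\ge 1-|z|$, and the rearranged Julia estimate $|F(z)|^2\le d_\phi(\zeta)\,\frac{1-|\phi(z)|^2}{1-|z|^2}$, and then separately shows that the real part of $\bar{\eta}\zeta F(z)$ converges to $d_\phi(\zeta)$ while the modulus does too, which forces convergence of $F$ itself; only after that does one recover $\phi'(\zeta)$ from $F$. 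Without this (or an equivalent substitute), the final assertions $\phi'(\zeta)=d_\phi(\zeta)\bar{\zeta}\eta$ and $d_\phi(\zeta)=\angle\lim\frac{1-|\phi(z)|}{1-|z|}$ remain unproved.
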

In addition to the Julia-Caratheodory theorem, we use the Julia's lemma which gives a useful geometric result. For $k>0$ and $\zeta\in\partial\D$, let
\begin{align*}
E(\zeta,k)=\{z\in\D:|\zeta-z|^2\leq k(1-|z|^2)\}.
\end{align*}
A computation shows that $E(\zeta,k)=\left\{z\in\D:\left|\frac{1}{k+1}\zeta-z\right|\leq \frac{k}{1+k}\right\}$.
\begin{thm}[Julia's Lemma]\label{julialemma}
 Let $\zeta$ be a boundary point and $\phi:\D\rightarrow\D$ be analytic. If $d_\phi(\zeta)<\infty$ then $|\phi(\zeta)|=1$ where $\lim_{n\rightarrow\infty}\phi(a_n)=:\phi(\zeta)$ and $\{a_n\}$ is a sequence along which this lower limit is achieved. Moreover, $\phi(E(\zeta,k))\subseteq E(\phi(\zeta),kd_\phi(\zeta))$ for every $k>0$, that is,
\begin{align}\label{juliainequality}
\frac{|\phi(\zeta)-\phi(z)|^2}{1-|\phi(z)|^2}\leq d_\phi(\zeta)\frac{|\zeta-z|^2}{1-|z|^2}\quad\text{for all}\quad z\in\D.
\end{align}
\end{thm}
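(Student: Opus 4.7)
The plan is to use the Schwarz--Pick inequality along a sequence realizing the liminf, and then pass to the limit using the standard pseudo-hyperbolic identity. Let $d := d_\phi(\zeta) < \infty$ and choose a sequence $\{a_n\} \subset \D$ with $a_n \to \zeta$ and $\frac{1-|\phi(a_n)|}{1-|a_n|} \to d$. Passing to a subsequence if necessary, I may assume $\phi(a_n) \to \eta$ for some $\eta \in \overline{\D}$.

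The first step is to show that $|\eta| = 1$. If instead $|\eta| < 1$, then $1 - |\phi(a_n)| \to 1 - |\eta| > 0$, while $1 - |a_n| \to 0$, forcing $(1-|\phi(a_n)|)/(1-|a_n|) \to \infty$ and contradicting $d < \infty$. So $|\eta| = 1$.

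Next, for arbitrary $z \in \D$, I apply the Schwarz--Pick inequality to the pair $(z, a_n)$:
\begin{align*}
\left|\frac{\phi(z)-\phi(a_n)}{1-\overline{\phi(a_n)}\phi(z)}\right|^2 \leq \left|\frac{z-a_n}{1-\overline{a_n}z}\right|^2.
\end{align*}
Subtracting each side from $1$ and invoking the identity $1 - \left|\frac{w-a}{1-\bar a w}\right|^2 = \frac{(1-|a|^2)(1-|w|^2)}{|1-\bar a w|^2}$, this rearranges to
\begin{align*}
\frac{|1-\overline{\phi(a_n)}\phi(z)|^2}{1-|\phi(z)|^2} \leq \frac{1-|\phi(a_n)|^2}{1-|a_n|^2}\cdot\frac{|1-\overline{a_n}z|^2}{1-|z|^2}.
\end{align*}

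Now I let $n \to \infty$. Because both $|a_n| \to 1$ and $|\phi(a_n)| \to 1$, the quotient $(1-|\phi(a_n)|^2)/(1-|a_n|^2)$ has the same limit $d$ as $(1-|\phi(a_n)|)/(1-|a_n|)$. Passing to the limit yields
\begin{align*}
\frac{|1-\bar{\eta}\phi(z)|^2}{1-|\phi(z)|^2} \leq d\,\frac{|1-\bar{\zeta}z|^2}{1-|z|^2}.
\end{align*}
Since $|\zeta| = |\eta| = 1$, we have $|1 - \bar\zeta z| = |\zeta - z|$ and $|1 - \bar\eta \phi(z)| = |\eta - \phi(z)|$, which gives (\ref{juliainequality}). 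The inclusion $\phi(E(\zeta,k)) \subseteq E(\eta, kd)$ then reads directly off the definition of $E(\cdot,\cdot)$, completing the proof upon writing $\phi(\zeta) := \eta$.

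The only subtle point is ensuring that the passage to the limit in the middle display is legitimate for \emph{every} fixed $z$ (not just those along the sequence); this is immediate because the inequality holds for each $n$ individually and both sides are continuous functions of $a_n$ and $\phi(a_n)$ at the limit values (since $|z|, |\phi(z)| < 1$ keep the denominators bounded away from zero). No geometric estimate is needed beyond what Schwarz--Pick already provides, so there is no serious obstacle beyond this limiting argument.
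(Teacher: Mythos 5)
Your proof is correct: it is the classical Schwarz--Pick derivation of Julia's lemma (rearranging the Schwarz--Pick inequality via the identity for $1-\bigl|\frac{w-a}{1-\bar a w}\bigr|^2$ and passing to the limit along a sequence realizing the $\liminf$), and since the paper states Theorem \ref{julialemma} without proof, quoting it as a classical result from \cite{CM}, there is no in-paper argument to compare against. The only point you leave implicit is that the boundary value $\eta=\phi(\zeta)$ is independent of the chosen subsequence; this follows a posteriori from (\ref{juliainequality}), since taking $z=r\zeta$ with $r\to1^-$ sends the right-hand side to $0$ and forces $\phi(r\zeta)\to\eta$.
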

Note that (\ref{juliainequality}) shows that $d_\phi(\zeta)=0$ if and only if $\phi$ is a unimodular constant. Moreover, when $d_\phi(\zeta)\leq1$, $\phi(E(\zeta,k))\subseteq E(\phi(\zeta),k)$ thus, the set $E(\zeta,k)$ contains its image of $\phi$ when $\phi(\zeta)=\zeta$.

\subsection{Schatten Class}
For a positive compact operator $T$ on a separable Hilbert space $H$, there exist orthonormal sets $\{e_k\}$ in $H$ such that
\begin{align*}
Tx=\sum_k\lambda_k\langle x,e_k\rangle e_k,\quad x\in H,
\end{align*}
where the points $\{\lambda_k\}$ are nonnegative eigenvalues of $T$. This is referred to as the canonical form of a positive compact operator $T$. For $0<p<\infty$, a compact operator $T$ belongs to the Schatten class $S_p$ on $H$ if the sequence $\{\lambda_k\}$ belongs to the sequence space $l^p$,
\begin{align*}
\|T\|^p_{S_p}=\sum_k|\lambda_k|^p<\infty.
\end{align*}
When $1\leq p<\infty$, $S_p$ is the Banach space with the above norm and $S_p$ is a metric space when $0<p<1$. In general, if $T$ is a
compact linear operator on $H$, we say that $T\in S_p$ if $(T^*T)^{p/2}\in S_1$, $0<p<\infty$. Moreover,
\begin{align}\label{zhutext}
(T^*T)^{p/2}\in S_1 \Longleftrightarrow T^*T\in S_{p/2}.
\end{align}
In particular, when $T\in S_2$, we say that $T$ is a Hilbert-Schmidt integral operator. It is well known that every Hilbert-Schmidt operator on $L^2(X,\mu)$ is an integral operator induced by a function $K\in L^2(X\times X,\mu\times\mu)$ such that
\begin{align*}
Tf(z)=\int_XK(x,y)f(y)\,d\mu(y),\quad \forall f\in L^2(X,d\mu).
\end{align*}
The converse is also true. To study all of the basic properties of Schatten class operators above, we can refer to $\S1.4$ and $\S3.1$ in \cite{Z}.

\section{Composition Operators $C_\phi$ on $A^p(\omega)$}

In \cite{KM}, they also gave the necessary condition for the boundedness of $C_\phi$ on large Bergman spaces in terms of the angular derivative,
\begin{align}\label{necessaryangder}
d_\phi(\zeta)=\liminf_{z\rightarrow\zeta}\frac{1-|\phi(z)|}{1-|z|}\geq1,\quad\forall\zeta\in\partial\D.
\end{align}
The following Lemma enables us to consider only the boundary points $d_\phi(\zeta)=1$ in the proof of Theorem \ref{necessarycondip2}.
\begin{lem}\label{d>1}
Let $\omega$ be a regular fast weight. Then if $d_\phi(\zeta)>1$ for $\zeta\in\partial\D$ then
\begin{align*}
\lim_{z\rightarrow\zeta}\frac{\omega(z)}{\omega(\phi(z))}=0.
\end{align*}
\end{lem}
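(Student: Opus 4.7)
The plan is to convert the angular-derivative hypothesis into a pointwise inequality between $1-|z|$ and $1-|\phi(z)|$, and then feed that inequality into the regularity assumption (\ref{regular}) for $\omega$.

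First, I would unpack the liminf. Since $d_\phi(\zeta)>1$, I can pick $\delta>0$ so small that $d_\phi(\zeta)>1+2\delta$. By the very definition of $\liminf_{z\to\zeta}$, there is a neighborhood $U$ of $\zeta$ with
\begin{align*}
1-|\phi(z)|\;\geq\;(1+\delta)(1-|z|),\qquad z\in U\cap\D.
\end{align*}
Equivalently $|\phi(z)|\leq 1-(1+\delta)(1-|z|)$ for $z$ close to $\zeta$.

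Next I would use that $\omega$ is radial and eventually decreasing in $|z|$ (which follows from the subharmonicity condition $\Delta\varphi\geq C>0$ together with (\ref{regular}), since the latter forces $\omega(r)\to0$ monotonically as $r\to 1^-$). Shrinking $U$ if necessary so that every relevant radius lies in the monotone range,
\begin{align*}
\omega(\phi(z))\;=\;\omega(|\phi(z)|)\;\geq\;\omega\bigl(1-(1+\delta)(1-|z|)\bigr).
\end{align*}
Setting $t=(1+\delta)(1-|z|)$ and $\gamma=(1+\delta)^{-1}\in(0,1)$, this gives
\begin{align*}
\frac{\omega(z)}{\omega(\phi(z))}\;\leq\;\frac{\omega(1-\gamma t)}{\omega(1-t)}.
\end{align*}

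Finally I would invoke (\ref{regular}) directly with $\delta$ replaced by $\gamma$: as $z\to\zeta$ inside $U$, we have $1-|z|\to 0$ and hence $t\to 0^+$, so the right-hand side tends to $0$. This forces $\omega(z)/\omega(\phi(z))\to 0$ as $z\to\zeta$, as claimed.

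The only step that is not essentially automatic is the monotonicity of $\omega(r)$ needed for the radial comparison; the main obstacle is confirming that this holds on the interval actually traversed, which is why one restricts to $z$ close enough to the boundary and uses that weights in $\mathcal{W}$ satisfying (\ref{regular}) are monotone there. Everything else is a direct chain: $d_\phi(\zeta)>1\Rightarrow$ pointwise estimate on $|\phi(z)|\Rightarrow$ radial monotonicity $\Rightarrow$ rescaling $\Rightarrow$ the regularity hypothesis.
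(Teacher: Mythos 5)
Your proof is correct and follows essentially the same route as the paper's: both convert $d_\phi(\zeta)>1$ into the pointwise bound $1-|\phi(z)|\geq(1+\delta)(1-|z|)$ near $\zeta$ and then feed this into the regularity condition (\ref{regular}). Your direct substitution $t=(1+\delta)(1-|z|)$ is in fact slightly cleaner, since it avoids the paper's detour through $|z|^{1+\epsilon}$ (via the inequality $x^A\geq 1-A(1-x)$) and handles the case $d_\phi(\zeta)=\infty$ uniformly rather than as a separate case, while relying on the same radial monotonicity of $\omega$ that the paper's own argument uses implicitly.
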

\begin{proof}
We first assume that $\liminf_{z\rightarrow\zeta}\frac{1-|\phi(z)|}{1-|z|}=d_\phi(\zeta)=1+2\epsilon<\infty$ for some $\zeta\in\partial\D$. Then for $\epsilon>0$, there exists an open disk $D_r(\zeta)$ centered at $\zeta$ with radius $r$ such that
\begin{align*}
\frac{1-|\phi(z)|}{1-|z|}>d_\phi(\zeta)-\epsilon,\quad \forall z\in D_r(\zeta)\cap\D.
\end{align*}
Thus, using the relation $x^A\geq1-A(1-x)$ when $A>1$ and $0<x<1$, we have
\begin{align}\label{maininequality}
|\phi(z)|\leq1-(d_\phi(\zeta)-\epsilon)(1-|z|)<1-(1+\epsilon)(1-|z|)\leq|z|^{1+\epsilon},
\end{align}
for $z\in D_r(\zeta)\cap\D$. On the other hand, if we take a sufficiently small $t$ such that $(1-t)^{\frac{1}{1+\epsilon}}\geq1-\frac{1+\epsilon^2}{1+\epsilon}t$, then from the assumption (\ref{regular}) we obtain the following result,
\begin{align*}
\frac{\omega(z)}{\omega(|\phi(z)|)}\leq\frac{\omega(z)}{\omega(|z|^{1+\epsilon})}=\frac{\omega((1-t)^{\frac{1}{1+\epsilon}})}{\omega(1-t)}\leq\frac{\omega(1-\frac{1+\epsilon^2}{1+\epsilon}t)}{\omega(1-t)}\longrightarrow0,
\end{align*}
since $\omega(|\phi(z)|)\geq\omega(|z|^{1+\epsilon})$ by (\ref{maininequality}). When $d_\phi(\zeta)=\liminf_{z\rightarrow\zeta}\frac{1-|\phi(z)|}{1-|z|}=\infty$, we have the prompt result, $|\phi(z)|<|z|^M$, $M>1$ for near $\zeta$ by the definition, since there exists $M>1$ such that
\begin{align*}
\log\frac{1}{|\phi(z)|}>M\log\frac{1}{|z|},
\end{align*}
for $|z|$ near $1$. Thus, we complete the proof.
\end{proof}

We now prove our first theorem.
\begin{thm}\label{necessarycondip2}
Given a regular weight $\omega$ in $\mathcal{W}$, let $\phi$ be an analytic self-map
of $\D$. If the composition operator $C_\phi$ is bounded on $A^2(\omega)$, then
\begin{align*}
\limsup_{z\rightarrow\zeta\in\partial\D}\frac{\omega(z)}{\omega(\phi(z))}<\infty.
\end{align*}
\end{thm}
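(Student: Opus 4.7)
The plan is to argue by contradiction, testing the bounded operator $C_\phi$ against the normalized reproducing kernels of $A^2(\omega)$ and reducing to the case $d_\phi(\zeta)=1$ via Lemma \ref{d>1}. Suppose the conclusion fails: there exist $\zeta\in\partial\D$ and a sequence $z_n\to\zeta$ with $\omega(z_n)/\omega(\phi(z_n))\to\infty$. By \eqref{necessaryangder} we have $d_\phi(\zeta)\ge 1$, and Lemma \ref{d>1} rules out $d_\phi(\zeta)>1$ (which would force the ratio to tend to $0$), so we may assume $d_\phi(\zeta)=1$. In this case Theorem \ref{JC} provides a unimodular nontangential limit $\eta$ of $\phi$ at $\zeta$.

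The standard identity $C_\phi^* K_z=K_{\phi(z)}$, valid on any reproducing-kernel Hilbert space because $\langle C_\phi f,K_z\rangle=f(\phi(z))=\langle f,K_{\phi(z)}\rangle$, together with the boundedness of $C_\phi$ yields
\[
K(\phi(z_n),\phi(z_n))=\|C_\phi^* K_{z_n}\|_{2,\omega}^2\le\|C_\phi\|^2\,\|K_{z_n}\|_{2,\omega}^2=\|C_\phi\|^2\,K(z_n,z_n).
\]
Applying the diagonal estimate $K(w,w)\approx\omega(w)^{-1}/\tau(w)^2$ from \eqref{diagonalesti} to both sides, one obtains
\[
\frac{\omega(z_n)}{\omega(\phi(z_n))}\lesssim\|C_\phi\|^2\,\frac{\tau(\phi(z_n))^2}{\tau(z_n)^2}.
\]

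The last step is to check that the right-hand side stays bounded, which gives the contradiction. Since $\omega$ is radial and decreasing in $|w|$, the blow-up $\omega(z_n)/\omega(\phi(z_n))\to\infty$ forces $|\phi(z_n)|>|z_n|$ for all sufficiently large $n$. The condition $\tau\searrow 0$ says that the radial function $\tau$ is monotonically decreasing in $|w|$, so $\tau(\phi(z_n))\le\tau(z_n)$, and the preceding inequality becomes $\omega(z_n)/\omega(\phi(z_n))\lesssim\|C_\phi\|^2$, contradicting the blow-up.

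The main obstacle I foresee is the final comparison of $\tau$-values. If the condition "$\tau\searrow 0$" is read merely as $\tau(z)\to 0$ rather than as strict radial monotonicity, then in the regime $d_\phi(\zeta)=1$ one would instead invoke Julia's lemma (Theorem \ref{julialemma}) to confine $\phi(z_n)$ to a horodisk $E(\eta,k)$ and combine the Lipschitz bound (ii) in $(\mathbf{L})$ with the smoothing $\tau'(z)\to 0$ near $\partial\D$ to deduce $\tau(\phi(z_n))\lesssim\tau(z_n)$. Either formulation completes the contradiction and establishes the theorem.
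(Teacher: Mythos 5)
Your proof is correct, and the overall skeleton (contradiction along a sequence $z_n$, the identity $C_\phi^*K_z=K_{\phi(z)}$, the diagonal estimate \eqref{diagonalesti}) matches the paper's; but you resolve the one genuine difficulty --- controlling the factor $\tau(\phi(z_n))^2/\tau(z_n)^2$ --- by a different and more elementary route. The paper invokes Julia's lemma at a point with $d_\phi(\zeta)=1$ to get $|\phi(r\zeta)|\geq r$, hence $M_\phi(r)\geq r$, and then tests $C_\phi^*$ not at $z_n$ but at auxiliary points $\xi_n$ with $|\xi_n|=|z_n|$ and $|\phi(\xi_n)|=M_\phi(|z_n|)$, where both the $\omega$-ratio and the $\tau$-ratio move in the favorable direction. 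You instead observe that the contradiction hypothesis itself already forces $|\phi(z_n)|>|z_n|$ for large $n$ (since $\omega$ is radial and decreasing, a ratio $\omega(z_n)/\omega(\phi(z_n))>1$ is impossible otherwise), so $\tau(\phi(z_n))\leq\tau(z_n)$ and the kernel inequality at $z_n$ itself gives the contradiction. This is legitimate and in fact shorter: it dispenses with Julia's lemma, with $M_\phi$, and even with the reduction to $d_\phi(\zeta)=1$ that you (and the paper) set up but that your final argument never uses. The two facts you should make explicit are the ones both proofs lean on: that $\omega$ is radially nonincreasing, which for radial $\varphi\in\mathcal{C}^2$ with $\Delta\varphi>0$ follows from $(r\varphi'(r))'=r\Delta\varphi>0$ and $\varphi'(0)\cdot 0=0$, so $\varphi'>0$ on $(0,1)$; and that $\tau$ is radially decreasing near the boundary (the meaning of $\tau\searrow 0$, used identically in the paper at \eqref{tau1}), which is why your monotonicity comparison is applicable for $|z_n|$ close enough to $1$. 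Your hedging final paragraph about an alternative via horodisks and the Lipschitz property of $\tau$ is unnecessary; the main line of your argument stands on its own.
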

\begin{proof}
Since the composition operator is bounded, we have $d_\phi(\zeta)\geq1$ for all boundary points $\zeta$ by (\ref{necessaryangder}). By Lemma \ref{d>1}, it suffices to check only the boundary point $\zeta$ satisfying $d_\phi(\zeta)=1$. For a boundary point $\zeta$ such that $d_\phi(\zeta)=1$, we have the following relation of inclusion by Theorem \ref{julialemma},
\begin{align*}
\phi\left(E\left(\zeta,\frac{1-r}{1+r}\right)\right)\subseteq E\left(\phi(\zeta),\frac{1-r}{1+r}\right),\quad r\in(0,1).
\end{align*}
Here, the set $E(\zeta,\frac{1-r}{1+r})$ introduced in Section 2.3 is the closed disk centered at $\frac{1+r}{2}\zeta$ with radius $\frac{1-r}{2}$ so, $r\zeta$ is the point on the boundary of $E(\zeta,\frac{1-r}{1+r})$ closest to $0$. Thus, we have
\begin{align*}
\left|\frac{1+r}{2}\phi(\zeta)-\phi(r\zeta)\right|\leq\frac{1-r}{2},
\end{align*}
that is, $|\phi(r\zeta)|\geq r$ for $0<r<1$ since $|\phi(\zeta)|=1$ by Theorem \ref{JC}. Therefore, we have $r_0\in(0,1)$ such that
\begin{align}\label{tau1}
\tau(\phi(|z|\zeta))\leq\tau(z)\quad\text{for}\quad r_0\leq|z|<1,
\end{align}
since $\tau(z)$ is a radial decreasing function when $|z|\rightarrow1^-$.
Now, define the following radial function
\begin{align*}
M_{\phi}(r)=\sup_{\eta\in\partial\D}|\phi(r\eta)|,\quad 0<r<1.
\end{align*}
Then (\ref{tau1}) follows that
\begin{align}\label{keyinequal}
\tau(M_{\phi}(|z|))\leq\tau(\phi(|z|\zeta))\leq\tau(z),\quad\text{for}\quad r_0<|z|<1.
\end{align}
Now, we assume that there is a sequence $\{z_n\}$ converges to $\zeta$ with $\frac{\omega(z_n)}{\omega(\phi(z_n))}\rightarrow\infty$ when $n\rightarrow\infty$. We can choose $\{\xi_n\}$ in $\D$ such that $|\xi_n|=|z_n|$ and $|\phi(\xi_n)|=M_\phi(|z_n|)$.
Using the relation $C_\phi^*K_z(\xi)=\langle C_\phi^*K_z,K_\xi\rangle=\langle K_z,C_\phi K_\xi\rangle=K_{\phi(z)}(\xi)$, we have
\begin{align*}
\|C_\phi^*k_{z}\|^2_{2,\omega}=\frac{K_{\phi(z)}(\phi(z))}{\|K_{z}\|^2_{2,\omega}}\approx\frac{\tau(z)^2}{\tau(\phi(z))^2}\frac{\omega(z)}{\omega(\phi(z))},\quad\forall z\in\D,
\end{align*}
thus by (\ref{keyinequal}), we obtain
\begin{align*}
\|C_\phi\|^2\gtrsim\|C_\phi^*k_{\xi_n}\|^2_{2,\omega}&\gtrsim\frac{\omega(\xi_n)}{\omega(\phi(\xi_n))}\frac{\tau(\xi_n)^2}{\tau(\phi(\xi_n))^2}\\&=\frac{\omega(z_n)}{\omega(M_{\phi}(|z_n|))}\frac{\tau(z_n)^2}{\tau(M_{\phi}(|z_n|))^2}
\geq\frac{\omega(z_n)}{\omega(\phi(z_n))},
\end{align*}
for large $n>N$. Thus, we have its contradiction and complete our proof.
\end{proof}
From Theorem \ref{necessarycondip2} together with Theorem \ref{knownresult}[KM], we conclude that (\ref{suffibddwithoutU}) is the equivalent condition for the boundedness of $C_\phi$ on $A^2(\omega)$. Furthermore, by the change of variables in the measure theory in Theorem C in $\S 39$ of \cite{H}, we have
\begin{align*}
\|C_\phi f\|^p_{p,\omega}&=\int_\D|f\circ\phi(z)|^p\omega(z)\,dA(z)=\int_\D|f(z)|^p[\omega dA]\circ\phi^{-1}(z).
\end{align*}
Denote
\begin{align*}
d\mu^\phi_{\omega}(z):=\omega(z)^{-1}[\omega dA]\circ\phi^{-1}(z).
\end{align*}
Then $C_\phi$ is (compact) bounded on $A^p(\omega)$ for $0<p<\infty$ if and only if the measure $\mu^\phi_\omega$ is a (vanishing) Carleson measure. Therefore the Carleson measure theorem shows that the condition (\ref{suffibddwithoutU}) is still valid for the boundedness for all ranges of $p$.

\begin{thm}\label{pextension}
Given a regular weight $\omega$ in $\mathcal{W}$, let $\phi$ be an analytic self-map
of $\D$. Then the composition operator $C_\phi$ is bounded on $A^p(\omega)$, $0<p<\infty$ if and only if
\begin{align}\label{u=1bdd}
\limsup_{|z|\rightarrow1}\frac{\omega(z)}{\omega(\phi(z))}<\infty.
\end{align}
\end{thm}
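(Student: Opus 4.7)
The strategy is to reduce boundedness of $C_\phi$ on $A^p(\omega)$ to a Carleson-type condition on a naturally associated pullback measure, and then exploit the fact that this Carleson condition is insensitive to $p$. The case $p=2$ can then be invoked as a black box: sufficiency is Theorem \ref{knownresult}, and necessity is exactly Theorem \ref{necessarycondip2} proved just above.

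For general $0<p<\infty$, I would set up the pullback measure already recorded just before the theorem statement,
\begin{align*}
d\mu^\phi_\omega(z) := \omega(z)^{-1}[\omega\, dA]\circ\phi^{-1}(z),
\end{align*}
so that the change-of-variables identity cited from Theorem C, $\S 39$ of \cite{H} yields
\begin{align*}
\|C_\phi f\|_{p,\omega}^p = \int_\D |f(z)|^p\omega(z)\,d\mu^\phi_\omega(z) = \|f\|_{L^p(\omega d\mu^\phi_\omega)}^p.
\end{align*}
Thus $C_\phi$ is bounded on $A^p(\omega)$ if and only if the embedding $A^p(\omega)\hookrightarrow L^p(\omega d\mu^\phi_\omega)$ is bounded, i.e., if and only if $\mu^\phi_\omega$ is a Carleson measure for $A^p(\omega)$.

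By the Carleson measure theorem recalled in Section 2.1, the Carleson condition on $\mu^\phi_\omega$ is characterized purely geometrically by
\begin{align*}
\sup_{z\in\D}\frac{\mu^\phi_\omega(D(\delta\tau(z)))}{\tau(z)^2}<\infty
\end{align*}
for a sufficiently small $\delta\in(0,m_\tau)$, and this condition does not depend on $p$. Chaining these equivalences, $C_\phi$ is bounded on $A^p(\omega)$ iff $\mu^\phi_\omega$ is Carleson iff $C_\phi$ is bounded on $A^2(\omega)$, and the $p=2$ case closes the argument.

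I do not expect a serious obstacle here: the change-of-variables identity is standard, positivity and Borel measurability of $\mu^\phi_\omega$ are automatic, and the substantive work has already been carried out in the proof of Theorem \ref{necessarycondip2}. The only point deserving brief attention is that the characterizing $\delta$ in the Carleson measure theorem is valid uniformly across $p$, which is built into the statement of the theorem and so requires no extra argument.
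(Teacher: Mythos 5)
Your proposal is correct and follows essentially the same route as the paper: the paper likewise combines Theorem \ref{knownresult} (sufficiency) and Theorem \ref{necessarycondip2} (necessity) for $p=2$, then uses the change-of-variables identity to identify boundedness of $C_\phi$ on $A^p(\omega)$ with the Carleson property of $\mu^\phi_\omega$, which the Carleson measure theorem shows is independent of $p$. No gaps.
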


\section{Weighted Composition Operators $uC_\phi$ on $A^2(\omega)$}
For a positive Borel measure $\mu$, we introduce the Berezin transform $\widetilde{\mu}$ on $\D$ given by
\begin{align*}
\widetilde{\mu}(z)=\int_\D|k_z(\xi)|^2\omega(\xi)d\mu(\xi),\quad z\in \D,
\end{align*}
where $k_z$ is the normalized kernel of $A^2(\omega)$.
 For $\delta\in(0,1)$, the averaging function $\widehat{\mu}_\delta$ over the disks $D(\delta\tau(z))$ is defined by
\begin{align*}
\widehat{\mu}_\delta(z)=\frac{\mu(D(\delta\tau(z)))}{|D(\delta\tau(z))|},\quad z\in \D.
\end{align*}
In \cite{APP}, we already calculated the following relation between the Berezin transform and the averaging function,
\begin{align}\label{averbere}
\widehat{\mu}_\delta(z)\leq C\widetilde{\mu}(z),\quad z\in\D.
\end{align}
Moreover, we can see that (2),(3), and (4) in the following Proposition are equivalent in \cite{APP}. Before proving Proposition \ref{rescarl}, we introduce the covering lemma which plays an essential role in the proof of many theorems including Carleson measure theorem.
\begin{lem}\cite{O}\label{oleinik}
Let $\tau$ be a positive function satisfying properties $(\mathbf{L})$ and $\delta\in(0,m_\tau)$. Then there exists a sequence of points $\{a_k\}\subset\D$ such that
\begin{itemize}
\item[(1)] $a_j\notin D(\delta\tau(a_k))$, for $j\neq k$.
\item[(2)] $\bigcup_k D(\delta\tau(a_k))=\D$.
\item[(3)] $\widetilde{D}(\delta\tau(a_k))\subset D(3\delta\tau(a_k))$, where $\widetilde{D}(\delta\tau(a_k))=\bigcup_{z\in D(\delta\tau(a_k))} D(\delta\tau(z))$
\item[(4)] $\{D(3\delta\tau(a_k))\}$ is a covering of $\D$ of finite multiplicity $N$.
\end{itemize}
Here, we call the sequence $\{a_k\}_{k=1}^\infty$ $\delta$-sequence.
\end{lem}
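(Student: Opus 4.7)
The plan is a Vitali-type maximal-packing argument adapted to the variable scale $\tau$, using both properties of $(\mathbf{L})$: the Lipschitz estimate (ii), and its consequence (\ref{comparable}), which guarantees $\tau(z) \approx \tau(a)$ whenever $z \in D(\delta\tau(a))$. First I would construct $\{a_k\}$ via Zorn's lemma applied to the family $\mathcal{F}$ of subsets $A \subset \D$ satisfying the separation condition $|a - a'| \geq \delta\tau(a)$ for all distinct $a, a' \in A$; chains in $\mathcal{F}$ have their unions as upper bounds, so a maximal element $\{a_k\} \in \mathcal{F}$ exists and gives (1) by construction. Countability follows a posteriori from Step 4, since the pairwise disjoint disks $D(a_k, \delta\tau(a_k)/2) \subset \D$ produced there must form a countable family.

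Property (2) comes from maximality. Suppose some $z \in \D$ lies in no $D(\delta\tau(a_k))$; then $|z-a_k| \geq \delta\tau(a_k)$ for every $k$, and the only obstruction to enlarging $\{a_k\}$ by $z$ is that $|z - a_k| < \delta\tau(z)$ for some $k$. The Lipschitz bound with $\delta C_2 < 1/4$ forces $\tau(a_k) > (3/4)\tau(z)$, so in fact $|z - a_k| < (4/3)\delta\tau(a_k)$. Since the definition $m_\tau = \min(1,C_1^{-1},C_2^{-1})/4$ carries a built-in safety factor $1/4$, running the Zorn construction with a separation parameter slightly below $\delta$ absorbs the resulting $4/3$ and delivers (2) for the original $\delta$.

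Property (3) is the most direct. If $w \in \widetilde{D}(\delta\tau(a_k))$, pick $z \in D(\delta\tau(a_k))$ with $|w - z| < \delta\tau(z)$; then (\ref{comparable}) applied at $a_k$ gives $\tau(z) \leq 2\tau(a_k)$, so
\[
|w - a_k| \leq |w - z| + |z - a_k| < \delta\tau(z) + \delta\tau(a_k) \leq 3\delta\tau(a_k),
\]
i.e., $w \in D(3\delta\tau(a_k))$. For property (4), swapping indices in (1) yields $|a_j - a_k| \geq \max(\delta\tau(a_j), \delta\tau(a_k))$, so $\{D(a_k, \delta\tau(a_k)/2)\}$ is a pairwise disjoint family. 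Fix $z \in \D$ and put $I = \{k : z \in D(3\delta\tau(a_k))\}$. For $j, k \in I$ the triangle inequality gives $|a_j - a_k| < 3\delta(\tau(a_j) + \tau(a_k))$, and the Lipschitz estimate then forces $\tau(a_j) \approx \tau(a_k)$ uniformly, with comparability constant depending only on $\delta$ and $C_2$. Hence the pairwise disjoint disks $D(a_k, \delta\tau(a_k)/2)$, $k \in I$, each have area comparable to $\tau(a_{k_0})^2$ for any fixed $k_0 \in I$ and all lie inside a single Euclidean disk around $z$ of radius $\lesssim \tau(a_{k_0})$; a volume comparison then bounds $|I|$ by an absolute constant $N$.

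The main obstacle is the constant juggling in Step 2: the statements (1) and (2) must hold for the same $\delta$, and the inherently asymmetric definition of $D(\delta\tau(\cdot))$ means that maximality alone does not immediately yield (2). The factor $1/4$ in the definition of $m_\tau$ is precisely what provides the slack to push this through. Everything else reduces either to the triangle inequality together with (\ref{comparable}) or to a routine area-packing count.
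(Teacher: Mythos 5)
The paper offers no proof of this lemma at all --- it is quoted verbatim from Oleinik's paper \cite{O} --- so there is nothing internal to compare against; your argument has to stand on its own. Your treatment of (3) and (4) is correct and is the routine part: (3) is the triangle inequality plus (\ref{comparable}), and (4) is the standard volume-packing count once you know the disks $D(a_k,\delta\tau(a_k)/2)$ are pairwise disjoint and that $\tau(a_j)\approx\tau(a_k)$ for all indices $j,k$ whose big disks meet a common point. The problem is exactly where you suspect it is, in getting (1) and (2) for the \emph{same} $\delta$, and your proposed repair does not close it. A maximal family that is $\delta'$-separated in the sense $|a-a'|\ge\delta'\max(\tau(a),\tau(a'))$ yields, by maximality, that every uncovered $z$ satisfies $|z-a_k|<\delta'\tau(z)<\frac{4}{3}\delta'\tau(a_k)$ for some $k$; choosing $\delta'=\frac34\delta$ does make the union of the $D(\delta\tau(a_k))$ cover $\D$, but then the separation you actually possess is only $|a_j-a_k|\ge\frac34\delta\tau(a_k)$, which does not give $a_j\notin D(\delta\tau(a_k))$. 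The factor $\frac14$ in $m_\tau$ cannot rescue this: it is what controls the oscillation $C_2\delta<\frac14$ of $\tau$ across a disk $D(\delta\tau(\cdot))$ and is therefore the \emph{source} of the loss $\frac43$, not a reserve of slack. Structurally, for any maximal packing the covering radius (measured in units of $\tau(a_k)$) comes out at least $\delta'/(1-C_2\delta')>\delta'$ while the separation is exactly $\delta'$, so no choice of the single parameter $\delta'$ makes the separation radius $\ge\delta$ and the covering radius $\le\delta$ simultaneously once $\tau$ is nonconstant.

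The missing idea is that one should not take an extremal packing but an explicit net whose covering radius is a \emph{definite fraction} of its separation, so that there is room to absorb the $\tau$-oscillation. Oleinik's construction (reproduced in \cite{PP1}) slices $\D$ into annuli $r_n\le|z|<r_{n+1}$ with $r_{n+1}-r_n\approx c\,\delta\tau(r_n)$ and subdivides each annulus into polar rectangles of angular width $\approx c\,\delta\tau(r_n)$, taking $\{a_k\}$ to be their centers. For an essentially square cell of side $s$, adjacent centers are at distance about $s$ while the cell sits inside the disk of radius $s/\sqrt2$ about its center; choosing $c$ so that $s=c\,\delta\tau$ with $1<c<\sqrt2$ (say $c=1.2$) gives separation $\ge\delta\tau(a_k)$ and covering radius $\le\delta\tau(a_k)$ simultaneously, with the remaining margin absorbing the factors $1\pm C_2\delta$ coming from property (ii) of $(\mathbf{L})$. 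Your steps (3) and (4) then go through unchanged for this sequence. I would also flag, as a minor point, that your disjointness claim for the half-radius disks in step (4) needs the symmetric form $|a_j-a_k|\ge\delta\max(\tau(a_j),\tau(a_k))$, which the explicit construction supplies but which does not follow from property (1) alone as stated.
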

Now, we characterize Carleson measures on $A^2(\omega)$ in terms of averaging function and Berezin transform.
\begin{pro}\label{rescarl}
Let $\omega\in\mathcal{W}$ and $\delta\in(0,m_\tau/2)$. For $\mu\geq0$, the following conditions are equivalent:
\begin{itemize}
\item[(1)]$\mu$ is a Carleson measure on $A^2(\omega)$.
\item[(2)]$\widetilde{\mu}$ is bounded on $\D$.
\item[(3)]$\hmu_{\delta}$ is bounded on $\D$.
\item[(4)] The sequence $\{\widehat{\mu}_\delta(a_k)\}$ is bounded for every $\delta$- sequence $\{a_k\}$.
\end{itemize}
\end{pro}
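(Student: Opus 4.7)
The plan is to reduce $(1)\Leftrightarrow(3)$ directly to the Carleson measure theorem and to derive the remaining equivalences using the pointwise inequality (\ref{averbere}), the sub-mean value Lemma \ref{subharmoni-1}, and the covering Lemma \ref{oleinik}. Since the Euclidean disk $D(\delta\tau(z))$ has area $\pi\delta^2\tau(z)^2$, the averaging function satisfies
\begin{align*}
\widehat{\mu}_\delta(z)=\frac{\mu(D(\delta\tau(z)))}{\pi\delta^2\tau(z)^2},
\end{align*}
so condition $(3)$ coincides (up to the multiplicative constant $\pi\delta^2$) with the sup-characterization of Carleson measures in part~(1) of the Carleson measure theorem; hence $(1)\Leftrightarrow(3)$ is immediate.

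Among $(2),(3),(4)$, the implication $(3)\Rightarrow(4)$ is trivial and $(2)\Rightarrow(3)$ follows at once from the pointwise bound (\ref{averbere}). For $(4)\Rightarrow(3)$, I would fix a $\delta$-sequence $\{a_k\}$ from Lemma \ref{oleinik}; given $z\in\D$, Lemma \ref{oleinik}(2) yields some $a_k$ with $z\in D(\delta\tau(a_k))$, and then (\ref{comparable}) together with the triangle inequality forces $D(\delta\tau(z))\subset D(3\delta\tau(a_k))$. Covering the larger disk by finitely many of the original $D(\delta\tau(a_j))$'s and using the finite-multiplicity property of Lemma \ref{oleinik}(4) produces $\widehat{\mu}_\delta(z)\lesssim\sup_j\widehat{\mu}_\delta(a_j)<\infty$.

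The main obstacle will be $(3)\Rightarrow(2)$, since it upgrades a local boundedness assertion into a global integral estimate. My plan is to split
\begin{align*}
\widetilde{\mu}(z)\leq\sum_k\sup_{\xi\in D(\delta\tau(a_k))}\bigl(|k_z(\xi)|^2\omega(\xi)\bigr)\,\mu(D(\delta\tau(a_k))),
\end{align*}
apply Lemma \ref{subharmoni-1} with $p=2$, $\beta=1$ to the holomorphic function $\xi\mapsto k_z(\xi)$ so that each pointwise supremum is replaced by an $A^2(\omega)$-average over a slightly enlarged concentric disk, use $\tau(\xi)\approx\tau(a_k)$ on these disks via (\ref{comparable}), and invoke the hypothesis $\widehat{\mu}_\delta(a_k)\lesssim 1$. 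The finite multiplicity of the covering will then collapse the resulting double sum into a constant multiple of $\int_\D|k_z(\xi)|^2\omega(\xi)\,dA(\xi)=\|k_z\|_{2,\omega}^2=1$, yielding a uniform bound on $\widetilde{\mu}$ in $z\in\D$.
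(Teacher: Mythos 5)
Your proposal is correct, and its engine -- the sub-mean-value Lemma \ref{subharmoni-1} combined with the Oleinik covering Lemma \ref{oleinik} and the comparability (\ref{comparable}) -- is exactly the engine of the paper's proof. What differs is how you arrange the cycle of implications. The paper proves only $(1)\Rightarrow(2)$ (clear, since $\|k_z\|_{2,\omega}=1$) and $(4)\Rightarrow(1)$, quoting \cite{APP} for the equivalence of $(2)$, $(3)$, $(4)$; its $(4)\Rightarrow(1)$ argument is precisely your $(3)\Rightarrow(2)$ computation run for a general $f\in A^2(\omega)$ rather than for $f=k_z$, and this has the side benefit of producing the quantitative bound (\ref{calcul}) that the paper reuses later in Theorems \ref{Berezin} and \ref{essentialnorm}. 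You instead outsource $(1)\Leftrightarrow(3)$ to the Carleson measure theorem (legitimate, and arguably the most economical route to that particular equivalence) and then must supply $(4)\Rightarrow(3)$ directly, an implication the paper's cycle never needs. That step is the only place where you are a little too quick: the finite multiplicity of $\{D(3\delta\tau(a_j))\}$ bounds the number of these disks \emph{containing a given point}, not the number \emph{meeting a given disk}, so to bound the number of $j$ with $D(\delta\tau(a_j))\cap D(3\delta\tau(a_k))\neq\emptyset$ you should first note via the Lipschitz property (ii) of $(\mathbf{L})$ and $\delta<m_\tau$ that $\tau(a_j)\approx\tau(a_k)$ for all such $j$, so that all these disks sit inside $D(C\delta\tau(a_k))$ and have area $\gtrsim\tau(a_k)^2$; integrating the multiplicity function over $D(C\delta\tau(a_k))$ then bounds their number by an absolute constant. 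With that standard counting inserted, your argument is complete. A final minor point: the Carleson measure theorem is stated for ``sufficiently small'' $\delta$, whereas the proposition fixes $\delta\in(0,m_\tau/2)$; the same counting argument shows $\sup_z\widehat{\mu}_\delta(z)$ is comparable for all admissible $\delta$, so this causes no real trouble, but it is worth a sentence.
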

\begin{proof}
It is clear that (1) $\Rightarrow$ (2). Thus, we remain to show that (4) implies (1). By Lemma \ref{subharmoni-1}, (3) of Lemma \ref{oleinik} and (\ref{comparable}), we have
\begin{align*}
\sup_{\xi\in D(\delta\tau(a_k))}|f(\xi)|^2\omega(\xi)&\lesssim\sup_{\xi\in D(\delta\tau(a_k))}\frac{1}{\tau(\xi)^2}\int_{D(\delta\tau(\xi))}|f|^2\omega dA\\
&\lesssim\frac{1}{\tau(a_k)^2}\int_{D(3\delta\tau(a_k))}|f|^2\omega dA.
\end{align*}
Therefore, by Lemma \ref{oleinik} and the inequality above, we have the desired result,
\begin{align}\label{calcul}
\int_\D|f(\xi)|^2\omega(\xi)d\mu(\xi)&\leq\sum_{k=1}^\infty\int_{D(\delta\tau(a_k))}|f(\xi)|^2\omega(\xi)d\mu(\xi)\nonumber\\
&\leq\sum_{k=1}^\infty\mu(D(\delta\tau(a_k)))\sup_{\xi\in D(\delta\tau(a_k))}|f(\xi)|^2\omega(\xi)\nonumber\\
&\lesssim\sum_{k=1}^\infty\frac{\mu(D(\delta\tau(a_k)))}{|D(\delta\tau(a_k))|}\int_{D(3\delta\tau(a_k))}|f|^2\omega dA\nonumber\\
&\lesssim \sup_{a\in\D}\hmu_\delta(a) N\|f\|^2_{2,\omega}.
\end{align}
\end{proof}

For an analytic self-map $\phi$ of $\D$ and a function $u\in L^1(\D)$, we define the $\phi$-Berezin transform of $u$ by
\begin{align*}
B_\phi u(z)=\int_\D |k_z(\phi(\xi))|^2u(\xi)\omega(\xi)dA(\xi).
\end{align*}

\begin{thm}\label{Berezin}
Let $\omega\in\mathcal{W}$ and $u$ be an analytic function on $\D$. Then the weighted composition operator $uC_\phi$ is bounded on $A^2(\omega)$ if and only if $B_\phi(|u|^2)\in L^\infty(\D)$. Moreover, $\|uC_\phi\|\approx\sup_{z\in\D}B_\phi(|u|^2)(z)$.
\end{thm}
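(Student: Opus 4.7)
The plan is to recast the boundedness of $uC_\phi$ as a Carleson embedding problem for a suitable pullback measure, and then to identify the Berezin transform of that measure with $B_\phi(|u|^2)$. Once this identification is made, Proposition \ref{rescarl} delivers the result immediately.

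First, for a Borel set $E \subset \D$ I define the pullback measure
\begin{align*}
\mu_u(E) := \int_{\phi^{-1}(E)} |u(\xi)|^2 \omega(\xi)\, dA(\xi),
\end{align*}
and set $d\nu_u(z) := \omega(z)^{-1}\, d\mu_u(z)$. By the change of variables formula (Theorem C in \S 39 of \cite{H}), applied exactly as in the computation preceding Theorem \ref{pextension},
\begin{align*}
\|uC_\phi f\|_{2,\omega}^2 = \int_\D |f(\phi(\xi))|^2 |u(\xi)|^2 \omega(\xi)\, dA(\xi) = \int_\D |f(z)|^2\, d\mu_u(z) = \int_\D |f(z)|^2 \omega(z)\, d\nu_u(z).
\end{align*}
Thus $uC_\phi$ is bounded on $A^2(\omega)$ if and only if the embedding $A^2(\omega) \hookrightarrow L^2(\omega\, d\nu_u)$ is bounded, with equality of squared norms on both sides; in other words, $\nu_u$ is a Carleson measure for $A^2(\omega)$ and $\|uC_\phi\|^2 \approx \|\nu_u\|_{\text{Carleson}}$.

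Next I compute the Berezin transform of $\nu_u$: using the definition and another change of variables,
\begin{align*}
\widetilde{\nu_u}(z) = \int_\D |k_z(\xi)|^2 \omega(\xi)\, d\nu_u(\xi) = \int_\D |k_z(\xi)|^2\, d\mu_u(\xi) = \int_\D |k_z(\phi(\xi))|^2 |u(\xi)|^2 \omega(\xi)\, dA(\xi),
\end{align*}
which is precisely $B_\phi(|u|^2)(z)$. Combining this identification with the equivalence (1) $\Leftrightarrow$ (2) of Proposition \ref{rescarl} yields that $uC_\phi$ is bounded on $A^2(\omega)$ if and only if $B_\phi(|u|^2) \in L^\infty(\D)$.

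For the norm comparison $\|uC_\phi\|^2 \approx \sup_{z\in\D} B_\phi(|u|^2)(z)$, the upper estimate comes from tracing constants through the estimate (\ref{calcul}) in the proof of Proposition \ref{rescarl} together with $\widehat{(\nu_u)}_\delta \lesssim \widetilde{\nu_u} = B_\phi(|u|^2)$ from (\ref{averbere}); the lower estimate comes from testing $uC_\phi^*$ (equivalently $(uC_\phi)^*$) against the normalized reproducing kernel $k_z$, using $\|k_z\|_{2,\omega} = 1$ and
\begin{align*}
\|(uC_\phi)^* k_z\|_{2,\omega}^2 \leq \|uC_\phi\|^2,
\end{align*}
since a direct computation via $\langle uC_\phi f, k_z\rangle$ shows $\|uC_\phi\|^2 \geq \|uC_\phi f\|_{2,\omega}^2 / \|f\|_{2,\omega}^2$ applied to $f = k_z$ reproduces exactly $B_\phi(|u|^2)(z)$. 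The only genuinely delicate point is ensuring that the pullback identity applies to the non-negative Borel measure $\mu_u$ even when $\phi$ fails to be injective; this is harmless because the change of variables formula for push-forward measures requires only measurability of $\phi$, which holds automatically.
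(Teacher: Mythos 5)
Your proof is correct and follows essentially the same route as the paper: pull back $|u|^2\omega\,dA$ under $\phi$, identify the Berezin transform of the resulting measure with $B_\phi(|u|^2)$, and invoke Proposition \ref{rescarl} together with the change-of-variables identity and the test computation $\|uC_\phi k_z\|_{2,\omega}^2=B_\phi(|u|^2)(z)$. The only wobble is the passing reference to $(uC_\phi)^*k_z$ in the lower estimate --- the quantity that equals $B_\phi(|u|^2)(z)$ is $\|uC_\phi k_z\|_{2,\omega}^2$, not $\|(uC_\phi)^*k_z\|_{2,\omega}^2$ --- but you end up computing the right thing.
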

\begin{proof}
Given an analytic function $u$, we let $d\mu_{|u|^2}(z)=|u(z)|^2\omega(z)dA(z)$. Now, we define the positive measure
\begin{align}\label{Carlesonmeasure}
d\mu^\phi_{\omega,u}(z):=\omega(z)^{-1}d\mu_{|u|^2}\circ\phi^{-1}(z).
\end{align}
By the change of variables in the measure theory, we have
\begin{align*}
\int_\D|u(z)(f\circ\phi)(z)|^2\omega(z)\,dA(z)&=\int_\D|(f\circ\phi)(z)|^2\,d\mu_{|u|^2}(z)\\&=\int_\D|f(z)|^2\omega(z)\,d\mu_{\omega,u}^\phi(z)
\end{align*}
Thus, the fact that the measure $d\mu_{\omega,u}^\phi$ is a Carleson measure is equivalent to that the Berezin transform $\widetilde{\mu_{\omega,u}^\phi}$ is bounded on $\D$ by Proposition \ref{rescarl}.
On the other hand,
\begin{align}\label{Berezinphi}
\widetilde{\mu^\phi_{\omega,u}}(z)&=\int_\D|k_z(\xi)|^2\omega(\xi)\,d\mu^\phi_{\omega,u}(\xi)\nonumber\\
&=\int_\D|k_z(\phi(\xi))|^2|u(\xi)|^2\omega(\xi)\,dA(\xi)=B_\phi(|u|^2)(z),
\end{align}
for all $z\in\D$. Thus, the proof is complete. Furthermore, from (\ref{calcul}) and (\ref{averbere}), we have
\begin{align*}
\|uC_\phi\|\lesssim\sup_{z\in\D}B_\phi(|u|^2)(z).
\end{align*}
Finally, we obtain $\|uC_\phi\|\approx\sup_{z\in\D}B_\phi(|u|^2)(z)$ since $B_\phi(|u|^2)(z)\leq\|uC_\phi\|$ for all $z\in\D$.

\end{proof}

As an immediate consequence of Theorem \ref{Berezin} we obtain more useful necessary condition for the boundedness of $uC_\phi$.
\begin{cor}
Let $\omega\in\mathcal{W}$ and $u$ be an analytic function $\D$. If $uC_\phi$ is bounded on $A^2(\omega)$, then
\begin{align}\label{problem}
\sup_{z\in\D}\frac{\tau(z)}{\tau(\phi(z))}\frac{\omega(z)^{1/2}}{\omega(\phi(z))^{1/2}}|u(z)|<\infty.
\end{align}
\end{cor}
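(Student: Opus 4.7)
The plan is to combine Theorem \ref{Berezin} with the sub-mean value inequality (Lemma \ref{subharmoni-1}) and the diagonal kernel estimate (\ref{diagonalesti}). By Theorem \ref{Berezin}, boundedness of $uC_\phi$ is equivalent to $\|B_\phi(|u|^2)\|_\infty<\infty$, so the strategy is to convert this averaged bound into a pointwise bound by evaluating $B_\phi(|u|^2)$ at the particular point $\phi(z)$ and then isolating the value of the integrand at the single point $z$.

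More precisely, fix $z\in\D$ and set
\begin{align*}
F(\xi) := u(\xi)\, K_{\phi(z)}(\phi(\xi)).
\end{align*}
This function is holomorphic on $\D$, since $K_{\phi(z)}\in A^2(\omega)$ (so $K_{\phi(z)}(\cdot)$ is analytic), $\phi$ is analytic, and $u$ is analytic. Noting that
\begin{align*}
B_\phi(|u|^2)(\phi(z)) = \frac{1}{\|K_{\phi(z)}\|^2_{2,\omega}}\int_\D |F(\xi)|^2\omega(\xi)\,dA(\xi),
\end{align*}
I would apply Lemma \ref{subharmoni-1} with $p=2$, $\beta=1$ to $F$ at the base point $z$, restricting the right-hand integral to $D(\delta\tau(z))\subset\D$. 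After dividing both sides by $\|K_{\phi(z)}\|^2_{2,\omega}$, this yields
\begin{align*}
|u(z)|^2\, |k_{\phi(z)}(\phi(z))|^2\, \omega(z) \lesssim \frac{B_\phi(|u|^2)(\phi(z))}{\tau(z)^2} \leq \frac{\|B_\phi(|u|^2)\|_\infty}{\tau(z)^2}.
\end{align*}

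For the last step, I would identify $|k_{\phi(z)}(\phi(z))|^2 = K(\phi(z),\phi(z))$ and use the diagonal kernel estimate (\ref{diagonalesti}) to replace this by a quantity comparable to $\omega(\phi(z))^{-1}\tau(\phi(z))^{-2}$. Rearranging the resulting inequality and taking square roots gives
\begin{align*}
\frac{\tau(z)}{\tau(\phi(z))}\,\frac{\omega(z)^{1/2}}{\omega(\phi(z))^{1/2}}\, |u(z)| \lesssim \|B_\phi(|u|^2)\|_\infty^{1/2} \lesssim \|uC_\phi\|,
\end{align*}
uniformly in $z\in\D$, which is exactly (\ref{problem}). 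The only point that requires any care is recognizing that, for fixed $z$, the integrand in $B_\phi(|u|^2)(\phi(z))$ factors as $|F|^2\omega/\|K_{\phi(z)}\|_{2,\omega}^2$ with $F$ holomorphic; this is what makes Lemma \ref{subharmoni-1} directly applicable, and the rest reduces to bookkeeping with the diagonal kernel asymptotics.
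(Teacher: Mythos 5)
Your proposal is correct and follows essentially the same route as the paper: evaluate $B_\phi(|u|^2)$ at $\phi(z)$, restrict the integral to $D(\delta\tau(z))$, apply the sub-mean value inequality of Lemma \ref{subharmoni-1} to the holomorphic function $u(\xi)K_{\phi(z)}(\phi(\xi))$, and finish with the diagonal estimate (\ref{diagonalesti}). The paper writes these steps more tersely as a single chain of inequalities, but the substance is identical.
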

\begin{proof}
For any $z\in\D$, Theorem \ref{Berezin} follows that
\begin{align*}
\infty>B_\phi(|u|^2)(\phi(z))&=\int_\D|k_{\phi(z)}(\phi(\xi))|^2|u(\xi)|^2\omega(\xi)\,dA(\xi)\\
&\geq\int_{D(\delta\tau(z))}|k_{\phi(z)}(\phi(\xi))|^2|u(\xi)|^2\omega(\xi)\,dA(\xi)\\
&\gtrsim\tau(z)^2|k_{\phi(z)}(\phi(z))|^2|u(z)|^2\omega(z)\\
&\gtrsim\frac{\tau(z)^2}{\tau(\phi(z))^2}\frac{\omega(z)}{\omega(\phi(z))}|u(z)|^2.
\end{align*}
\end{proof}


Before characterizing the compactness of $uC_\phi$, we recall the definition of the essential norm $\|T\|_e$ for a bounded operator $T$ on a Banach space $X$ as follows:
\begin{align*}
\|T\|_e=\inf\{\|T-K\|:K \ is\ any\ compact\ operator\ on\ X\}.
\end{align*}
For a bounded operator $uC_\phi$ on $A^2(\omega)$, we use the following formula introduced in \cite{S} to estimate the essential norm of $uC_\phi$,
\begin{align}\label{equiessennorm}
\|uC_\varphi\|_e=\lim_{n\rightarrow\infty}\|uC_\varphi R_n\|,
\end{align}
where $R_n$ is the orthogonal projection of $A^2(\omega)$ onto $z^nA^2(\omega)$ defined by
\begin{align*}
R_nf(z)=\sum^\infty_{k=n}a_kz^k\quad\text{for}\quad f(z)=\sum^\infty_{k=0}a_kz^k.
\end{align*}
The formula (\ref{equiessennorm}) can be obtained by the similar proof of Proposition 5.1 in \cite{S}. In fact, we can find the proof of (\ref{equiessennorm}) adjusting to large weighted Bergman spaces in \cite[p.775]{KM}.
Now, we prove the following estimate for the essential norm of a bounded weighted composition operator $uC_\phi$ on $A^2(\omega)$.

\begin{thm}\label{essentialnorm}
Let $\omega\in\mathcal{W}$ and $u$ be an analytic function on $\D$. If $uC_\phi$ is bounded on $A^2(\omega)$, then there is an absolute constant $C\geq1$ such that
\begin{align*}
\limsup_{|z|\rightarrow1}B_\phi(|u|^2)(z)\leq\|uC_\phi\|_e\leq C\limsup_{|z|\rightarrow1}B_\phi(|u|^2)(z).
\end{align*}
\end{thm}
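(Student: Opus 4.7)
The plan is to treat the two inequalities separately: the normalized reproducing kernels $\{k_z\}$ test the lower bound, and the formula $(\ref{equiessennorm})$ combined with a localized version of the covering argument behind Proposition \ref{rescarl} delivers the upper bound.

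For the lower bound, Lemma \ref{testft} gives $k_z \to 0$ uniformly on compact subsets of $\D$ as $|z|\to 1^-$, and since $\|k_z\|_{2,\omega}=1$, pairing against polynomials (which are dense in $A^2(\omega)$) forces weak convergence $k_z \to 0$. Hence $\|Kk_z\|_{2,\omega}\to 0$ for every compact operator $K$ on $A^2(\omega)$. The computation $(\ref{Berezinphi})$ evaluated at $z$ gives $\|uC_\phi k_z\|^2_{2,\omega}=B_\phi(|u|^2)(z)$, so the triangle inequality yields
$$\|uC_\phi - K\|^2 \;\geq\; \limsup_{|z|\to 1^-} \|(uC_\phi - K)k_z\|^2_{2,\omega} \;=\; \limsup_{|z|\to 1^-} B_\phi(|u|^2)(z),$$
and taking the infimum over compact $K$ closes the lower bound (the squaring matches the convention already used in Theorem \ref{Berezin}).

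For the upper bound I would start from $\|uC_\phi\|_e = \lim_n \|uC_\phi R_n\|$. Given $f\in A^2(\omega)$ with $\|f\|_{2,\omega}\leq 1$ and $r\in(0,1)$, the change of variables leading to $(\ref{Carlesonmeasure})$ gives
$$\|uC_\phi R_n f\|^2_{2,\omega} \;=\; \int_\D |R_n f|^2 \,\omega\, d\mu_{\omega,u}^\phi \;=\; I_n(r) + J_n(r),$$
where $I_n(r)$ and $J_n(r)$ denote the contributions over $\{|z|\leq r\}$ and $\{|z|>r\}$ respectively. For $I_n(r)$, the pointwise bound $|R_n f(z)|^2 \leq \|R_n K_z\|^2_{2,\omega} = \sum_{k\geq n}|e_k(z)|^2$, combined with Dini's theorem applied to the monotone, continuous tails of $K(z,z)=\sum_k|e_k(z)|^2$, shows $I_n(r) \to 0$ uniformly in $f$ as $n\to\infty$. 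For $J_n(r)$ I would rerun the covering argument in the proof of Proposition \ref{rescarl} using only those $\delta$-lattice points $a_k$ with $|a_k|>r'$, for a suitable $r'<r$, producing
$$J_n(r) \;\lesssim\; \sup_{|a|>r'} \widetilde{\mu_{\omega,u}^\phi}(a)\cdot \|f\|^2_{2,\omega} \;=\; \sup_{|a|>r'} B_\phi(|u|^2)(a)$$
by $(\ref{Berezinphi})$. Letting $n\to\infty$ kills $I_n(r)$, and then $r\to 1^-$ delivers the desired bound on $\|uC_\phi\|_e^2$.

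The main obstacle I anticipate is the localization step in the upper bound. Concretely, I need property (i) of $(\mathbf{L})$, namely $\tau(a_k)\lesssim 1-|a_k|$, to guarantee that every disk $D(3\delta\tau(a_k))$ meeting $\{|z|>r\}$ has center $a_k$ with $|a_k|>r'$ for some $r'<r$ depending only on $\delta$ and $C_1$. Once this geometric fact is in place the chain of inequalities in $(\ref{calcul})$ restricts cleanly to the subcollection of lattice points near the boundary, yielding the displayed bound on $J_n(r)$; the weak convergence required by the lower bound is by comparison routine.
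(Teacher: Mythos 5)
Your proposal is correct and follows essentially the same route as the paper: the lower bound via testing $uC_\phi-K$ on the weakly null normalized kernels $k_z$, and the upper bound via $\|uC_\phi\|_e=\lim_n\|uC_\phi R_nf\|$ with the integral split at $|z|=r$, the inner part killed by the uniform decay of $\|R_nK_\xi\|$ on compacts, and the outer part controlled by localizing the covering estimate of Proposition \ref{rescarl} to cells near the boundary (the paper phrases your $|a_k|>r'$ condition as $|z|+\delta\tau(z)>r$, and uses the explicit tail bound $\sum_{k\ge n}p_k^{-1}r^{2k}$ where you invoke Dini, but these are cosmetic differences).
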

\begin{proof}
For $\|f\|_{2,\omega}\leq1$ and a fixed $0<r<1$, we have
\begin{align*}
\|(uC_\phi R_n)f\|_{2,\omega}^2&=\int_\D|R_nf(\xi)|^2\omega(\xi)\,d\mu^\phi_{\omega,u}(\xi)\\
&=\int_{\D\setminus r\D}|R_nf(\xi)|^2\omega(\xi)\,d\mu^\phi_{\omega,u}(\xi)+\int_{r\D}|R_nf(\xi)|^2\omega(\xi)\,d\mu^\phi_{\omega,u}(\xi),
\end{align*}
where $r\D=\{z\in\D:|z|\leq r\}$. By the orthogonality of $R_n$, we have $|R_nf(\xi)|\leq\|f\|_{2,\omega}\|R_n K_\xi\|_{2,\omega}$
and the following series is uniformly bounded on $|\xi|\leq r<1$,
\begin{align*}
\|R_n K_\xi\|^2_{2,\omega}\lesssim\sum_{k=n}^\infty\frac{1}{p_k}r^{2k}<\infty,\quad\text{where}\quad p_k=\int^1_0s^{2k+1}\omega(s)ds.
\end{align*}
Thus, $\|R_n K_\xi\|\rightarrow0$ as $n\rightarrow\infty$ so that $|R_nf(\xi)|$ also uniformly converges to $0$ on $r\D$ as $n\rightarrow\infty$. Therefore the second integral vanishes as $n\rightarrow\infty$ since $\mu^\phi_{\omega,u}$ is a Carleson measure. For the first integral, we denote by $\mu^\phi_{\omega,u,r}:=\mu^\phi_{\omega,u}|_{\D\setminus r\D}$. For a fixed $r>0$, we easily calculate $(\D\setminus r\D)\cap D(\delta\tau(z))\neq\emptyset$ for $|z|+\delta\tau(z)>r$. Thus by (\ref{averbere}) and (\ref{calcul}), the first integral is dominated by
\begin{align*}
\int_{\D}|R_nf|^2\omega\,d\mu^\phi_{\omega,u,r}\lesssim\sup_{z\in\D}\widehat{\mu^\phi_{\omega,u,r}}_\delta(z)\|R_nf\|_{2,\omega}^2&\leq\sup_{|z|+\delta\tau(z)>r}\widehat{\mu^\phi_{\omega,u}}_\delta(z)\\&\lesssim\sup_{|z|+\delta\tau(z)>r}\widetilde{\mu^\phi_{\omega,u}}(z),
\end{align*}
for all $n>0$. Here, letting $r\rightarrow1$, then $|z|\rightarrow1^-$ since $\tau(z)\rightarrow0$. Thus we obtain the following upper estimate by (\ref{equiessennorm}) and the above inequality,
\begin{align*}
\|uC_\phi\|^2_e=\lim_{n\rightarrow\infty}\sup_{\|f\|_{2,\omega}\leq1}\|(uC_\phi R_n)f\|_{2,\omega}^2&\lesssim\limsup_{|z|\rightarrow1}\widetilde{\mu^\phi_{\omega,u}}(z)\\&=\limsup_{|z|\rightarrow1}B_\phi(|u|^2)(z).
\end{align*}
For the lower estimate, for any compact operator $K$ on $A^2(\omega)$, we have
\begin{align*}
\|uC_\phi\|^2_e\geq\|uC_\phi-K\|^2\geq\limsup_{|z|\rightarrow1}\|(uC_\phi)k_z\|_{2,\omega}^2=\limsup_{|z|\rightarrow1}B_\phi(|u|^2)(z).
\end{align*}
\end{proof}

In addition to Theorem \ref{essentialnorm}, we have the following useful necessary condition for the compactness of $uC_\phi$ on $A^2(\omega)$.

\begin{cor}
Let $\omega\in\mathcal{W}$ and $u$ be an analytic function on $\D$. If the weighted composition operator $uC_\phi$ is compact on $A^2(\omega)$ then
\begin{align}\label{compactcondi}
\lim_{|z|\rightarrow1^-}\frac{\tau(z)^2}{\tau(\phi(z))^2}\frac{\omega(z)}{\omega(\phi(z))}|u(z)|^2=0.
\end{align}
\end{cor}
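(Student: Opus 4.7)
The plan is to pass to the adjoint $(uC_\phi)^*$ and evaluate it on the normalized reproducing kernels $k_z$, since the compactness of $uC_\phi$ is equivalent to that of its adjoint, and a compact operator sends weakly null sequences to norm-null ones. First I would derive the action of $(uC_\phi)^*$ on reproducing kernels via the reproducing property: for every $f\in A^2(\omega)$,
\begin{align*}
\langle f,(uC_\phi)^*K_z\rangle=\langle (uC_\phi)f,K_z\rangle=u(z)f(\phi(z))=\langle f,\overline{u(z)}K_{\phi(z)}\rangle,
\end{align*}
so $(uC_\phi)^*K_z=\overline{u(z)}K_{\phi(z)}$. Dividing by $\|K_z\|_{2,\omega}$ and using $\|K_w\|_{2,\omega}^2=K(w,w)$ together with the diagonal estimate (\ref{diagonalesti}) yields
\begin{align*}
\|(uC_\phi)^*k_z\|_{2,\omega}^2=|u(z)|^2\,\frac{K(\phi(z),\phi(z))}{K(z,z)}\approx|u(z)|^2\,\frac{\tau(z)^2}{\tau(\phi(z))^2}\,\frac{\omega(z)}{\omega(\phi(z))},
\end{align*}
which is precisely (up to constants) the quantity appearing in (\ref{compactcondi}).

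Next I would show that $k_z\rightharpoonup 0$ weakly in $A^2(\omega)$ as $|z|\to 1^-$. The family is norm-bounded since $\|k_z\|_{2,\omega}=1$, and Lemma \ref{testft} already gives uniform convergence of $k_z$ to $0$ on every compact subset of $\D$. For a fixed $g\in A^2(\omega)$ and $r\in(0,1)$, splitting $\D=r\overline{\D}\sqcup(\D\setminus r\overline{\D})$, the compact piece tends to zero by uniform convergence, while the tail piece is controlled by Cauchy--Schwarz as $\|k_z\|_{2,\omega}\cdot\|g\chi_{\D\setminus r\overline{\D}}\|_{2,\omega}$, which can be made arbitrarily small by letting $r\to 1$ since $g\in L^2(\omega dA)$. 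Hence $\langle k_z,g\rangle\to 0$ for every $g\in A^2(\omega)$.

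Finally, since $uC_\phi$ is compact on the Hilbert space $A^2(\omega)$, so is $(uC_\phi)^*$, and therefore $\|(uC_\phi)^*k_z\|_{2,\omega}\to 0$ as $|z|\to 1^-$. Combining this with the displayed equivalence above delivers (\ref{compactcondi}). The only mildly delicate step is the verification that $k_z$ is weakly null, but this is handled cleanly by Lemma \ref{testft} together with the tail-estimate argument just described; everything else is a short computation built on the reproducing property and the two-sided diagonal kernel estimate (\ref{diagonalesti}).
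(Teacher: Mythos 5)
Your proposal is correct and follows essentially the same route as the paper: compute $(uC_\phi)^*K_z=\overline{u(z)}K_{\phi(z)}$, use the weak nullity of $k_z$ (via Lemma \ref{testft}) together with compactness to get $\|(uC_\phi)^*k_z\|_{2,\omega}\to0$, and conclude with the diagonal estimate (\ref{diagonalesti}). The only difference is that you spell out the tail estimate needed to upgrade Lemma \ref{testft} to genuine weak convergence, a detail the paper leaves implicit.
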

\begin{proof}
By Lemma \ref{testft}, we know that the normalized kernel sequence $\{k_{z}\}$ converges to $0$ weakly when $|z|\rightarrow1$. Since $(uC_\phi)^*K_z=\overline{u(z)}K_{\phi(z)}$, we have
\begin{align*}
0=\lim_{|z|\rightarrow1}\|(uC_\phi)^*k_{z}\|_{2,\omega}^2&=\lim_{|z|\rightarrow1}|u(z)|^2\frac{K(\phi(z),\phi(z))}{K(z,z)}\\
&\gtrsim \lim_{|z|\rightarrow1}|u(z)|^2\frac{\tau(z)^2}{\tau(\phi(z))^2}\frac{\omega(z)}{\omega(\phi(z))}.
\end{align*}
\end{proof}

\section{Schatten class weighted composition operators}
In order to study the Schatten class composition operator, we use some known results of Toeplitz operators acting on $A^2(\omega)$. First of all, we recall that the definition and some facts of Toeplitz operators defined on $A^2(\omega)$. For a finite complex Borel measure $\mu$ on $\D$, the Toeplitz operator $T_\mu$ is defined by
\begin{align*}
T_\mu f(z)=\int_\D f(\xi)K(z,\xi)\omega(\xi)d\mu(\xi),
\end{align*}
for $f\in A^2(\omega)$.
 Here, it is not clear the integrals above will converge, so we give the following additional condition on $\mu$,
\begin{align}\label{addconi}
\int_\D |K(z,\xi)|^2\omega(\xi) d|\mu|(\xi)<\infty,
\end{align}
for all $z\in\D$. The following lemma is from Lemma 2.2 in \cite{APP}.
\begin{lem}\label{fubini}
Let $\omega\in\mathcal{W}$, $\mu\geq0$ and $\mu$ be a Carleson measure. Then we have
\begin{align*}
\langle T_\mu f,g\rangle_\omega=\int_\D f(\xi)\overline{g(\xi)}\omega(\xi) d\mu(\xi),\quad f,g\in A^2(\omega),
\end{align*}
where $\langle f,g\rangle_\omega=\int_{\D}f(z)\overline{g(z)}\omega(z)\,dA(z)$.
\end{lem}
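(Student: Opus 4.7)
The natural approach is to recognize the Toeplitz operator $T_\mu$ as a composition involving the natural inclusion $I\colon A^2(\omega)\hookrightarrow L^2(\omega\,d\mu)$, which is bounded by the Carleson hypothesis on $\mu$. The plan is to establish the factorization $T_\mu=I^*I$, from which the desired identity follows at once, since for $f,g\in A^2(\omega)$,
\begin{align*}
\langle T_\mu f,g\rangle_\omega=\langle I^*If,g\rangle_\omega=\langle If,Ig\rangle_{L^2(\omega\,d\mu)}=\int_\D f(\xi)\overline{g(\xi)}\omega(\xi)\,d\mu(\xi).
\end{align*}

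The key step is to identify the adjoint $I^*\colon L^2(\omega\,d\mu)\to A^2(\omega)$ and to verify that it agrees with the integral defining $T_\mu$. For $h\in L^2(\omega\,d\mu)$, the image $I^*h$ lies in $A^2(\omega)$, so the reproducing property together with the definition of adjoint gives, at every $z\in\D$,
\begin{align*}
(I^*h)(z)=\langle I^*h,K_z\rangle_\omega=\langle h,K_z\rangle_{L^2(\omega\,d\mu)}=\int_\D h(\xi)\overline{K_z(\xi)}\omega(\xi)\,d\mu(\xi).
\end{align*}
Using the convention $K_z(\xi)=\overline{K(z,\xi)}$ from Section 2.2, the last integral becomes $\int_\D h(\xi)K(z,\xi)\omega(\xi)\,d\mu(\xi)$. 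Comparing this with the definition of $T_\mu$ yields $T_\mu f=I^*(If)$ for every $f\in A^2(\omega)$, establishing the factorization and incidentally confirming that $T_\mu$ is bounded on $A^2(\omega)$ with $\|T_\mu\|\le\|I\|^2$.

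The only subtlety is to ensure that the pairing $\langle h,K_z\rangle_{L^2(\omega\,d\mu)}$ in the computation above is well defined at each fixed $z\in\D$. This reduces to $K_z\in L^2(\omega\,d\mu)$, which the Carleson hypothesis, applied to $K_z\in A^2(\omega)$, supplies directly; indeed, the auxiliary assumption (\ref{addconi}) is exactly the statement $\|K_z\|_{L^2(\omega\,d\mu)}^2<\infty$ and is therefore absorbed into the Carleson property. I do not anticipate any real obstacle here. An equally valid alternative would be to apply Fubini's theorem directly to $\langle T_\mu f,g\rangle_\omega$, but the absolute-integrability justification would rely on the same Carleson embedding combined with Cauchy--Schwarz, so the operator-theoretic formulation is cleaner and extracts the structural identity $T_\mu=I^*I$ as a byproduct.
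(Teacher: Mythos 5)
Your argument is correct and complete. Note that the paper does not actually prove this lemma: it is imported as Lemma 2.2 of \cite{APP}, whose proof (as the label \texttt{fubini} suggests) proceeds by writing $\langle T_\mu f,g\rangle_\omega$ as a double integral, justifying the interchange of integration via Cauchy--Schwarz together with the Carleson embedding, and then invoking the reproducing property $\int_\D K(z,\xi)\overline{g(z)}\omega(z)\,dA(z)=\overline{g(\xi)}$. Your route through the factorization $T_\mu=I^*I$ is a genuine (if mild) variant: the adjoint computation $(I^*h)(z)=\langle h,IK_z\rangle_{L^2(\omega\,d\mu)}$ is legitimate because $I$ is bounded by hypothesis and $I^*h\in A^2(\omega)$ admits the reproducing formula, and you correctly note that the integrability hypothesis (\ref{addconi}) is automatic since $K_z\in A^2(\omega)\subset L^2(\omega\,d\mu)$. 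Both proofs ultimately rest on the same Cauchy--Schwarz/Carleson estimate, but yours additionally yields, at no extra cost, that $T_\mu$ is a bounded positive operator with $\|T_\mu\|\le\|I\|^2$ --- which is exactly the structure the paper exploits later when it identifies $(uC_\phi)^*(uC_\phi)$ with $T_{\mu_{\omega,u}^\phi}$ in (\ref{ToeCom}).
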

It is well known that composition operators are closely related to Toeplitz operators on weighted Bergman spaces. From Lemma \ref{fubini}, we have the relation,
\begin{align}\label{ToeCom}
\langle(uC_\phi)^*(uC_\varphi)f,g\rangle_\omega&=\int_\D f(\phi(z))\overline{g(\phi(z))}|u(z)|^2\omega(z)dA(z)\nonumber\\
&=\int_\D f(z)\overline{g(z)}\omega(z)d\mu^\phi_{\omega,u}(z)=\langle T_{\mu_{\omega,u}^\phi} f,g\rangle_\omega,
\end{align}
where $d\mu_{\omega,u}^\phi$ is defined by (\ref{Carlesonmeasure}). Thus, we can use the results of $T_{\mu_{\omega,u}^\phi}$ to see when the composition operators $uC_\varphi$ belong to $S_p$. In other words, it suffices to show when $T_{\mu_{\omega,u}^\phi}$ is in $S_{p/2}$ since $uC_\varphi\in S_p$ is equivalent to $(uC_\varphi)^*(uC_\varphi)\in S_{p/2}$ as we studied in Section 2.4.\\
\indent The following lemma is the characterization of the membership in the Schatten ideals of a Toeplitz operator acting on $A^2(\omega)$.
\begin{lem}[Theorem 4.6 \cite{APP}]\label{app}
Let $\omega\in\mathcal{W}$, $\delta\in(0,m_\tau/4)$ and $0<p<\infty$. For $\mu\geq0$, the following conditions are equivalent:
\begin{itemize}
\item[(1)]$T_\mu\in S_p(A^2(\omega))$.
\item[(2)]$\widetilde{\mu}\in L^p(\Delta \varphi dA)$.
\item[(3)]$\hmu_{\delta}\in L^p(\Delta \varphi dA)$.
\end{itemize}
\end{lem}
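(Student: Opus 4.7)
My plan is to prove the equivalences $(1)\Leftrightarrow(3)\Leftrightarrow(2)$ by transferring the problem to the $\delta$-lattice $\{a_k\}$ of Lemma \ref{oleinik}. The starting observation is that $\Delta\varphi(z)\approx\tau(z)^{-2}\approx|D(\delta\tau(a_k))|^{-1}$, so that for any nonnegative function $h$ that is essentially constant on each disk $D(\delta\tau(a_k))$ one has
\begin{equation*}
\int_\D h(z)^p\,\Delta\varphi(z)\,dA(z)\approx\sum_k h(a_k)^p,
\end{equation*}
which reduces the $L^p(\Delta\varphi\,dA)$ criteria in (2) and (3) to $\ell^p$ criteria on the lattice. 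Both $\hmu_\delta$ (by construction) and $\widetilde{\mu}$ (using (\ref{averbere}) together with a reverse bound) possess this slowly varying property, so the real content of the lemma is the Schatten condition (1).

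For $(3)\Leftrightarrow(2)$, the direction $\hmu_\delta\lesssim\widetilde{\mu}$ is already recorded in (\ref{averbere}). For the reverse control of $\widetilde{\mu}$ by the averaging function along the lattice, I would split $\widetilde{\mu}(z)=\sum_k\int_{D(\delta\tau(a_k))}|k_z|^2\omega\,d\mu$ and apply (\ref{kernesti}) to bound the integrand by $\tau(z)^{-2}\tau(a_k)^{-2}e^{-c\beta_\varphi(z,a_k)}\,\mu(D(\delta\tau(a_k)))$. A Schur test against the weight $\Delta\varphi\,dA$, whose off-diagonal decay $\exp(-c\beta_\varphi)$ is summable because $\beta_\varphi$ is a complete metric, then transfers an $L^p$ bound on $\hmu_\delta$ to one on $\widetilde{\mu}$.

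For $(3)\Rightarrow(1)$ I would decompose $\mu=\sum_k\mu_k$ with $\mu_k=\mu|_{D(\delta\tau(a_k))}$, so $T_\mu=\sum_k T_{\mu_k}$. Each $T_{\mu_k}$ is a positive operator whose integral kernel concentrates near $(a_k,a_k)$, and (\ref{diagonalesti}) lets me dominate it by a rank-one model $c_k\,K_{a_k}\otimes K_{a_k}$ of $S_p$-norm $\approx\hmu_\delta(a_k)$. The $p$-triangle inequality $\|\sum T_k\|_{S_p}^p\leq\sum\|T_k\|_{S_p}^p$ (valid for $0<p\leq 1$; for $p\geq 1$ I would use the ordinary triangle inequality together with a duality or interpolation reduction) then yields $\|T_\mu\|_{S_p}^p\lesssim\sum_k\hmu_\delta(a_k)^p$. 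Conversely, for $(1)\Rightarrow(3)$ I test $T_\mu$ against the normalized kernels: by Lemma \ref{fubini}, $\langle T_\mu k_{a_k},k_{a_k}\rangle_\omega=\widetilde{\mu}(a_k)\gtrsim\hmu_\delta(a_k)$. Because $\{k_{a_k}\}$ is almost-orthogonal by (\ref{kernesti}) combined with the completeness of $\beta_\varphi$, the diagonal Schatten estimate $\sum_k\langle Tf_k,f_k\rangle^p\lesssim\|T\|_{S_p}^p$ for positive $T\in S_p$ applies and closes the loop.

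The main obstacle will be the range $0<p<1$, where both directions involving the Schatten quantity are delicate: the quasi-subadditivity of $\|\cdot\|_{S_p}$ forces one to make the rank-one reduction of each $T_{\mu_k}$ genuinely quantitative, and the diagonal test used in $(1)\Rightarrow(3)$ requires the Gram matrix $G_{jk}=\langle k_{a_j},k_{a_k}\rangle_\omega$ to behave like a bounded operator on $\ell^p$. Both issues reduce to a Schur-type bound on the exponentially decaying off-diagonal weight $e^{-c\beta_\varphi(a_j,a_k)}$, which is the principal technical step and rests on the upper kernel estimate (\ref{kernesti}) together with the completeness of the metric $\beta_\varphi$.
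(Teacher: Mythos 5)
The first thing to say is that the paper contains no proof of this lemma: it is imported verbatim as Theorem 4.6 of \cite{APP}, so there is nothing in the text itself to compare your argument against except that reference. Your outline does follow the general strategy of that paper (discretization over the lattice of Lemma \ref{oleinik}, sufficiency via the decomposition $T_\mu=\sum_kT_{\mu_k}$, necessity via testing against kernels, with the exponential off-diagonal decay in (\ref{kernesti}) carrying the small-$p$ case), but two of your key steps fail as stated, and both failures occur exactly in the range $0<p<1$ that you yourself flag as the main obstacle. First, the operator domination $T_{\mu_k}\leq c_k\,K_{a_k}\otimes K_{a_k}$ is false: take $f$ with $f(a_k)=0$ but $f$ not identically zero on $D_k:=D(\delta\tau(a_k))$, and let $\mu_k$ charge the set where $f\neq0$; then $\langle T_{\mu_k}f,f\rangle>0$ while $c_k|\langle f,K_{a_k}\rangle|^2=0$. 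What is true is $T_{\mu_k}=\int_{D_k}K_\xi\otimes K_\xi\,\omega(\xi)\,d\mu(\xi)$, a continuous superposition of rank-one operators, but for $0<p<1$ the quasi-norm is not convex and cannot be integrated through this representation, and the trace bound $\mathrm{tr}\,T_{\mu_k}\approx\hmu_\delta(a_k)$ controls $\|T_{\mu_k}\|_{S_p}$ only for $p\geq1$. The standard repair is to drop the rank-one picture and use the inequality $\|T\|_{S_p}^p\leq\sum_{j,k}|\langle Te_j,e_k\rangle|^p$, valid for $0<p\leq1$ and any orthonormal basis, estimating the matrix entries of $T_\mu$ directly with (\ref{kernesti}).

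The more serious problem is the necessity direction. The ``diagonal Schatten estimate'' $\sum_k\langle Tf_k,f_k\rangle^p\lesssim\|T\|_{S_p}^p$ that you invoke for $(1)\Rightarrow(3)$ holds only for $p\geq1$: for a positive compact $T$ the diagonal sequence $\{\langle Te_k,e_k\rangle\}$ is majorized by the eigenvalue sequence, so convexity of $t\mapsto t^p$ gives the inequality when $p\geq1$, but for $0<p<1$ the function is concave and the inequality reverses --- the diagonal sum can be much larger than $\|T\|_{S_p}^p$. Hence testing $T_\mu$ on the normalized kernels, however almost-orthogonal they are, cannot by itself prove $(1)\Rightarrow(3)$ for small $p$. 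The argument in \cite{APP} (following Luecking's scheme) instead splits the lattice into finitely many well-separated sublattices, shows that on each sublattice the compression of $T_\mu$ to the span of the corresponding kernels is a small perturbation of the diagonal operator with entries $\widetilde{\mu}(a_k)$, and absorbs the off-diagonal error using the exponential decay in $\beta_\varphi$. Your Schur-test heuristic points toward the right mechanism, but without this separation-and-absorption step the case $0<p<1$ of the necessity remains a genuine gap.
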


\begin{thm}\label{S_pcomposition}
Let $0<p<\infty$. Let $\omega\in\mathcal{W}$ and $u$ be an analytic function on $\D$. Then $uC_\phi\in S_p$ if and only
if $B_\phi(|u|^2)\in L^{p/2}(\Delta\varphi dA)$.
\end{thm}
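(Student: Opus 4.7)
The plan is to chain together the operator identity $(uC_\phi)^*(uC_\phi)=T_{\mu_{\omega,u}^\phi}$ established around (\ref{ToeCom}), the Schatten characterization of Toeplitz operators in Lemma \ref{app}, and the identification $\widetilde{\mu_{\omega,u}^\phi}=B_\phi(|u|^2)$ already computed in (\ref{Berezinphi}) inside the proof of Theorem \ref{Berezin}. The theorem should then fall out as a formal consequence, once one is careful about verifying that $\mu_{\omega,u}^\phi$ is a genuine Carleson measure so that the Toeplitz machinery applies.

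Concretely, I would proceed in four steps. First, invoke the equivalence (\ref{zhutext}) from Section 2.4 to replace $uC_\phi\in S_p$ by $(uC_\phi)^*(uC_\phi)\in S_{p/2}$. Second, use (\ref{ToeCom}), which was obtained via Lemma \ref{fubini}, to identify $(uC_\phi)^*(uC_\phi)=T_{\mu_{\omega,u}^\phi}$, where $\mu_{\omega,u}^\phi$ is the push-forward measure defined in (\ref{Carlesonmeasure}). Third, apply Lemma \ref{app} with exponent $p/2$ to $\mu_{\omega,u}^\phi$ to conclude $T_{\mu_{\omega,u}^\phi}\in S_{p/2}$ iff $\widetilde{\mu_{\omega,u}^\phi}\in L^{p/2}(\Delta\varphi\, dA)$. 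Fourth, substitute (\ref{Berezinphi}), i.e. $\widetilde{\mu_{\omega,u}^\phi}(z)=B_\phi(|u|^2)(z)$, to close the chain to $B_\phi(|u|^2)\in L^{p/2}(\Delta\varphi\, dA)$.

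The point requiring care is the Carleson bookkeeping on each side of the biconditional, since both (\ref{ToeCom}) and the statement of Lemma \ref{app} implicitly assume that $\mu_{\omega,u}^\phi$ is a positive Carleson measure on $A^2(\omega)$, so that Lemma \ref{fubini} applies and the integrability condition (\ref{addconi}) is legitimate. In the forward direction this is automatic: $uC_\phi\in S_p$ forces $uC_\phi$ to be bounded, and then Theorem \ref{Berezin} together with Proposition \ref{rescarl} gives that $\mu_{\omega,u}^\phi$ is Carleson. In the converse direction, starting from $B_\phi(|u|^2)\in L^{p/2}(\Delta\varphi\, dA)$, Lemma \ref{app} first produces $T_{\mu_{\omega,u}^\phi}\in S_{p/2}$; this operator is in particular bounded, so $\widetilde{\mu_{\omega,u}^\phi}=B_\phi(|u|^2)$ is bounded on $\D$, and Proposition \ref{rescarl} again yields that $\mu_{\omega,u}^\phi$ is a Carleson measure. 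Consequently $uC_\phi$ is bounded, (\ref{ToeCom}) is valid, and the Schatten membership of $T_{\mu_{\omega,u}^\phi}$ transfers back through (\ref{zhutext}) to $uC_\phi\in S_p$.

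I do not expect any genuinely hard step here: the theorem is a structural consequence of Lemma \ref{app}, the identification (\ref{Berezinphi}), and the Toeplitz representation (\ref{ToeCom}), with the only substantive work being the Carleson check described above, which has to be dispatched once on each direction of the equivalence.
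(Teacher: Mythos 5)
Your proposal follows essentially the same route as the paper's proof: reduce $uC_\phi\in S_p$ to $(uC_\phi)^*(uC_\phi)=T_{\mu_{\omega,u}^\phi}\in S_{p/2}$ via (\ref{ToeCom}), apply Lemma \ref{app}, and substitute the identity $\widetilde{\mu_{\omega,u}^\phi}=B_\phi(|u|^2)$ from (\ref{Berezinphi}). Your additional care in verifying that $\mu_{\omega,u}^\phi$ is a Carleson measure in each direction is a point the paper's two-line proof leaves implicit, but it does not change the argument.
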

\begin{proof}
If $uC_\phi\in S_p$ then $(uC_\varphi)^*(uC_\varphi)=T_{\mu_{\omega,u}^\phi}\in S_{p/2}$ by (\ref{ToeCom}). Thus, we have the equivalent condition $\widetilde{\mu^\phi_{\omega,u}}=B_\phi(|u|^2)\in L^{p/2}(\Delta\varphi dA)$ from (\ref{Berezinphi}) and Lemma \ref{app}.
\end{proof}

\begin{cor}\label{schatten}
Let $u$ be an analytic function on $\D$ and let $\phi$ be
an analytic self-map of $\D$. Then $uC_\phi$ is a Hilbert-Schmidt operator if and only if
\begin{align*}
\int_\D \frac{\tau(z)^2}{\tau(\phi(z))^2}\frac{\omega(z)}{\omega(\phi(z))}|u(z)|^2\Delta\varphi(z)dA(z)<\infty.
\end{align*}
\end{cor}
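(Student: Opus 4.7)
The plan is to invoke Theorem \ref{S_pcomposition} with $p=2$, which reduces the Hilbert--Schmidt characterization to showing that $B_\phi(|u|^2) \in L^1(\Delta\varphi\, dA)$. Concretely, it suffices to establish the two-sided comparison
\begin{align*}
\int_\D B_\phi(|u|^2)(z)\,\Delta\varphi(z)\, dA(z)
\;\approx\; \int_\D \frac{\tau(z)^2}{\tau(\phi(z))^2}\frac{\omega(z)}{\omega(\phi(z))}|u(z)|^2\,\Delta\varphi(z)\, dA(z).
\end{align*}

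First I would unfold the definition of $B_\phi$ and swap the order of integration via Tonelli (the integrand is nonnegative) to obtain
\begin{align*}
\int_\D B_\phi(|u|^2)(z)\,\Delta\varphi(z)\, dA(z)
= \int_\D |u(\xi)|^2\omega(\xi)\left(\int_\D \frac{|K(z,\phi(\xi))|^2}{K(z,z)}\,\Delta\varphi(z)\, dA(z)\right) dA(\xi).
\end{align*}
By the very definition of $\tau$ we have $\tau(z)^2 \Delta\varphi(z)=1$, while (\ref{diagonalesti}) gives $K(z,z) \approx (\omega(z)\tau(z)^2)^{-1}$, so $\Delta\varphi(z)/K(z,z) \approx \omega(z)$. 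The inner integral is therefore comparable to
\begin{align*}
\int_\D |K(z,\phi(\xi))|^2\,\omega(z)\, dA(z) \;=\; \|K_{\phi(\xi)}\|_{2,\omega}^2 \;=\; K(\phi(\xi),\phi(\xi)),
\end{align*}
by the reproducing property on $A^2(\omega)$.

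A second application of (\ref{diagonalesti}) yields $K(\phi(\xi),\phi(\xi)) \approx (\omega(\phi(\xi))\tau(\phi(\xi))^2)^{-1}$, and reinserting the factor $\tau(\xi)^2 \Delta\varphi(\xi)=1$ in the outer integrand recasts the result in precisely the form stated in the corollary. The argument is essentially mechanical once Theorem \ref{S_pcomposition} is invoked; the only substantive inputs are the diagonal kernel estimate (\ref{diagonalesti}) and the reproducing property, both already available. I anticipate no genuine obstacle, since positivity makes the Tonelli step automatic and both directions ``$\lesssim$'' and ``$\gtrsim$'' follow from the same two-sided bound on $K(w,w)$.
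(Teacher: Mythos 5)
Your proposal is correct and follows essentially the same route as the paper's proof: apply Theorem \ref{S_pcomposition} with $p=2$, use $\tau(z)^2\Delta\varphi(z)=1$ and the diagonal estimate (\ref{diagonalesti}) to convert $|k_z|^2\Delta\varphi(z)$ into $|K_z|^2\omega(z)$, integrate out $z$ via Fubini and the reproducing property to get $K(\phi(\xi),\phi(\xi))$, and finish with (\ref{diagonalesti}) again. No gaps.
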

\begin{proof}
By Theorem \ref{S_pcomposition} and (\ref{diagonalesti}), we have
\begin{align*}
&\int_\D|B_\phi(|u|^2)(z)|\Delta\varphi(z)dA(z)\\&=\int_\D\int_\D|k_z(\phi(\xi))|^2|u(\xi)|^2\omega(\xi)\,dA(\xi)\frac{1}{\tau(z)^2}dA(z)\\
&\approx\int_\D\int_\D|K(z,\phi(\xi))|^2|u(\xi)|^2\omega(\xi)\,dA(\xi)\omega(z)dA(z)\\
&=\int_\D K(\phi(\xi),\phi(\xi))|u(\xi)|^2\omega(\xi)\,dA(\xi)\\
&\approx\int_\D \frac{\tau(\xi)^2}{\tau(\phi(\xi))^2}\frac{\omega(\xi)}{\omega(\phi(\xi))}|u(\xi)|^2\Delta\varphi(\xi)dA(\xi)<\infty.
\end{align*}
\end{proof}

{\bf Acknowledgements.} The author would like to thank the referee for indicating various mistakes and giving helpful comments.

\bibliographystyle{amsplain}

\end{document}